%
%
%
%
%

%
\RequirePackage{fix-cm}
\documentclass[12pt,reqno]{amsart}
\usepackage{blindtext}
\textwidth  6.5in \textheight 8.5in


\AtBeginDocument{%
  \setlength{\oddsidemargin}{\dimexpr(\paperwidth-\textwidth)/2-1in}%
  \setlength{\evensidemargin}{\oddsidemargin}%
  \setlength{\topmargin}{%
    \dimexpr(\paperheight-\textheight)/2-\headheight-\headsep-1in}%
}

\usepackage{graphicx}

\usepackage{rotating}
\usepackage{tikz}
\usepackage{crop}
\usepackage{graphicx}
\usepackage{array}
\usepackage{color}

\usepackage{amsmath,amssymb}

\usepackage{algorithm}
\usepackage{graphicx}
\usepackage{bm}

\usepackage[noend]{algpseudocode}

\makeatletter
\def\BState{\State\hskip-\ALG@thistlm}
\makeatother

\usepackage{flushend}
\usepackage{stfloats}
\usepackage{times}
\usepackage{tabularx}
\usepackage{pstricks}

\newtheorem{Thm}{Theorem}[section]
\newtheorem{definition}[Thm]{Definition}
\newtheorem{lemma}[Thm]{Lemma}
\newtheorem{proposition}[Thm]{Proposition}
\newtheorem{theorem}[Thm]{Theorem}
\newtheorem{example}[Thm]{Example}

\newcommand{\bigxi}{{{\Xi}}}
\newcommand{\diffxi}{\nabla_{{\Xi}}}

\newcommand{\realn}{\mathbb{R}}
\newcommand{\zahlen}{\mathbb{Z}}
\newcommand{\greens}{G}

\newcommand{\braket}[1]{[\![ {#1} ]\!]} 
\newcommand{\braketsgn}[1]{{[\![ {#1} ]\!]}_{{sgn}}}
\newcommand{\brakett}[1]{{[\![ {#1} ]\!]}_{+}}

\newcommand{\fequiv}{\longleftrightarrow}
\newcommand{\sgn}{ {sgn}}

\newcommand{\bpsquash}[1]{\bm{\mu_{{ #1}}}}
\newcommand{\bidxsquash}[1]{\bm{\sigma_{{ #1}}}}

\newcommand{\znorm}[1]{\left|\left| #1 \right|\right|_0 }


%
%
%
%
%
\begin{document}

\title{Fast and exact evaluation of box splines via the PP-form}



\author[J. Horacsek]{Joshua Horacsek}
\address{Department of Computer Science, University of Calgary, 2500 University Dr NW, Calgary, AB T2N 1N4}
\email{joshua.horacsek@ucalgary.ca}

\author[U. Alim]{Usman Alim}
\address{Department of Computer Science, University of Calgary, 2500 University Dr NW, Calgary, AB T2N 1N4}
\email{joshua.horacsek@ucalgary.ca}




\begin{abstract}
For the class of non-degenerate box splines, we prove that these box splines are piecewise polynomial. This is not a new result, it is in fact a well known and useful property of box splines. However, our proof is constructive, and the main result of this work is a corollary that follows from this proof, namely one that gives an explicit construction scheme for the polynomial pieces in the interior regions of any non-degenerate box spline.

\end{abstract}
\keywords{Multivariate splines, box splines \and piecewise polynomials}

\maketitle

\section{Introduction} \label{sec:intro}
Box splines are a multivariate generalization of B-splines \cite{deboorbox} that are particularly useful in visualization and signal processing applications. 
They allow practitioners to tailor approximation schemes (on regular grids) to specific computational and theoretical parameters.
These parameters include {\em support size} (how many samples are needed to reconstruct a value), {\em smoothness} of reconstruction, and {\em order of approximation}.
There is particular interest in using box splines to span approximation spaces on non-Cartesian lattices, since certain non-Cartesian lattices, like the Body-Centered-Cubic (BCC) lattice, allow more efficient sampling and reconstruction schemes \cite{kimastar}.

Typically, to evaluate a box spline, one either relies on the recursive scheme \cite{deboorbox,boxeval,boxnumer}, or one must derive an explicit representation of the spline -- which has so far been done on a case-by-case basis (see  \cite{pracbox} as an example). 
In this paper, we prove that a box spline's piecewise polynomial from always exists by explicit construction. 
We give a set construction recipe that yields the explicit truncated piecewise polynomial form for any box spline.
From this we derive the explicit polynomial within each region of evaluation.
Once these have been computed, we construct a binary space partitioning tree that splits the support of the box spline into regions of evaluation. 
Each leaf of the tree then gets an explicit polynomial. 
To evaluate the box spline, one must simply traverse the tree, then evaluate that polynomial.

When one wishes to evaluate a box spline, the task is not-so trivial.
The most widely known evaluation scheme is probably deBoor's recursive formula \cite{deboorbox}, which works well in low dimensions, but suffers from numerical issues in higher dimensions \cite{boxnumer,kimeval}. 
Kobbelt analysed and addressed these issues \cite{boxeval}, yet there still appears to be numerical issues in high dimensions \cite{kimeval}.
Moreover, these evaluation schemes tend to be fairly slow, since their evaluation schemes are recursive, and rely on solving a system of equations at each step of the recursion.

In the work of Kim et al \cite{kimeval}, a fast and stable scheme is derived by creating an indexing system that can quickly identify each region of evaluation.
Then within each region of evaluation, the Bernstein B\'{e}zier (BB) form of the piecewise polynomial is derived from a stable form of the recursive evaluation formula.
However, there are a few pitfalls to their method. 
Firstly, it relies on a stable evaluation scheme to derive the BB form of each polynomial.
Secondly, its indexing scheme must be derived on a case-by-case basis.
Finally, it relies on certain assumptions about the direction vectors that constitute a box spline, namely that they be rational.

The only assumptions we make in this work are that our direction vectors are real valued, and must span the target space.
Additionally, the method proposed in this work is exact, our decomposition makes no approximations, and only relies on operations that can be performed exactly within a machine. We do this by incrementally splitting the Fourier representations of a box spline into two parts, a difference operator and a Green's function, then deriving their spatial forms by a series of set constructions. 
The final result comes from a semi-discrete convolution between these two representations.
The paper proceeds as follows. In Section \ref{sec:bk} we provide a basic introduction to box splines and the notation we use to proceed with this work. 
In Section \ref{sec:support}, we derive the spatial form for the difference operator via a simple set construction akin (distributionally equivalent) to the inverse discrete Fourier transform of the difference operator.
We proceed in Section \ref{sec:greens} to construct the spatial form of the Green's function. At first glance, the Fourier form of the Green's function appears to be non-separable, which complicates finding a spatial representation.
However, by constructing vectors from the kernel of the matrix that defines a box spline, we can always construct a distributionally equivalent form that is separable, and  therefore has an easy to characterize inverse Fourier transform. 
We then combine those forms via a semi-discrete convolution.
In Section \ref{sec:stable_eval} we provide our main result, and use it to derive a fast and stable algorithm for exactly evaluating box splines.
Finally, in Section \ref{sec:examples}, we provide example constructions for some known box splines.

\section{Background} \label{sec:bk}
Following the established notation \cite{deboorbox}, we define a box spline as a generalized function $M_\bigxi : \realn^s \rightarrow \realn$ which is characterized by a list of $n$ column vectors ${\bm{\xi}}_1, \ {\bm{\xi}}_2, \ \cdots \ {\bm{\xi}}_n $, with each $\bm{\xi}_i \in \realn^s $, that are collected in an $s \times n$ matrix $ \bigxi := \left[ {\bm{\xi}}_1 \ {\bm{\xi}}_2 \ \cdots \ {\bm{\xi}}_n \right].$

The function $M_\bigxi$ is defined recursively by the convolution equation 
\begin{equation}
	M_{[ \ ]} := \delta 
	\text{   and   }
	M_{\bigxi} := \int^1_0 M_{\bigxi \backslash \bm\zeta}(\cdot -t\bm\zeta)dt 
\end{equation} 
where $\delta$ is the Dirac delta distribution. 
We say the box spline is non-degenerate if the dimension of the range of $\bigxi$ is $s$. 
When we speak of the kernel of a matrix, we speak of a specific basis for the null-space of that matrix. 
We explicitly choose the basis for the kernel so that it is in column echelon form. 
We use the notation $\ker{X}$ as shorthand for the null-space of $X$. For example, $\ker\Xi$ is the  $n \times (n-s)$ matrix that forms a basis for the null-space of $\bigxi$ and is in column echelon form. 
We write $\bm{z} \in \ker{X}$ if $X\bm{z}=\bm{0}$.

If a hyperplane defined by columns of $\Xi$ forms an $s-1$ dimensional subspace of $\realn^s$, then we collect it in the set $\mathbb{H}(\Xi)$. 
We define the mesh of $\Xi$ as 
\begin{equation}
	\Gamma(\Xi) := \bigcup_{H\in\mathbb{H}} H + \Xi\mathbb{Z}^n.
\end{equation}
This is slightly different than the conventional definition of $\Gamma(\Xi)$, projecting $\mathbb{Z}^n$ along $\Xi$ allows us to consider real valued direction vectors in our construction. We write $h \in \Gamma(\Xi)$ to denote a shifted hyper-plane.

We denote the Fourier equivalence between two functions with the symbol ``$\fequiv$''. For example, the function $f(\bm{x})$ has the Fourier transform $\hat{f}(\bm\omega)$, which we also write as $f(\bm{x}) \fequiv \hat{f}(\bm\omega)$. We use the convention that the Fourier transform is defined distributionally as $$\hat{f}(\bm\omega) := \int f(\bm{x})\exp(-i\bm\omega\cdot \bm{x})\ d\bm{x}.$$
Of pivotal importance to us, is the Fourier representation of a box spline 
\begin{equation}
	\widehat{M_\bigxi}(\bm\omega) = \prod^n_{j=1} \frac{1 - \exp(-i\bm\omega\cdot\bm\xi_j)}{i\bm\omega\cdot\bm\xi_j}
\end{equation}
which we split into two functions, the difference operator 
\begin{equation}
	\widehat{\nabla_\bigxi}(\bm{\omega}) := \prod^n_{j=1} {1 - \exp(-i\bm\omega\cdot\bm\xi_j)}
\end{equation}
and the Green's function 
\begin{equation}
\widehat{\greens_\bigxi}(\bm{\omega}) := \prod^n_{j=1} {(i\bm\omega\cdot\bm\xi_j)^{-1}}
\end{equation}
which, when applied with $\widehat{D_\Xi} := \prod^n_{j=1} {i\bm\omega\cdot\bm\xi_j}$ produces the Dirac impulse.

To index an element of a vector $\bm{\beta}$ we write $\beta( j)$ where $j$ is an integer index.
We also define $w(j) := i\bm\xi_j\cdot\bm\omega$, and collect all of those terms in the $n$-dimensional variable $\bm{w}$. 

For the $\bm\alpha$-power function, we use the definition $\bm{x}^{\bm\alpha}:=\prod^m_{j=1}x(j)^{\alpha(j)}$ where $\bm{x}$ is an $m$-dimensional variable. 
It should be understood that by $\bm{x}^{-\bm\alpha}$ we explicitly mean $1/\bm{x}^{\bm\alpha}.$ 
We also adopt the notations 
\begin{eqnarray}
	\braket{\bm{x}}^{\bm\alpha} & := & \prod^m_{j=1}\frac{x(j)^{\alpha(j)}}{\alpha(j)!}, \\ 
	\braketsgn{\bm{x}}^{\bm\alpha} & := & \prod^m_{j=1}\frac{\left(x(j)\right)_{\sgn}^{\alpha(j)}}{\alpha(j)!}, \\ 
	\brakett{\bm{x}}^{\bm\alpha} & := & \prod^m_{j=1}\frac{\left(x(j)\right)_{+}^{\alpha(j)}}{\alpha(j)!} ,
\end{eqnarray}
for the normalized, normalized signed and normalized one-sided $\bm\alpha$-power functions respectively. 
In the above, $(x)^k_\sgn := x^k\sgn(x)/2$ and $(x)^k_{+} := x^kH(x)$ where $H(x)$ is the heaviside distribution. 
We write $||\bm\alpha||_0$ to count the non-zero elements within $\bm\alpha$. 
If $\bm\alpha$ is an $n$-dimensional vector, we can write the $s \times ||\bm\alpha||_0$ matrix $\bigxi_\alpha$, which denotes the subset of $\bigxi$ that are selected by the non-zero $\alpha(j)$. 
Finally we use $\bm{0}, \bm{1}$ to denote vectors consisting of all $0$ or $1$ respectively, and the dimension of the vector should be inferred from context in which it is being used.

The crux of this work is establishing explicit representations of the spatial forms of the difference operator and the Green's function of the box spline. 
The final piecewise polynomial form will result from the semi-discrete convolution of these two forms.
\section{Difference Operator} \label{sec:support}
The difference operator has a simple characterization in the spatial domain which can be constructed with the help of the following definition.

\begin{definition} \label{def:sxi}
For an $s \times n$ matrix $\Xi$ with full rank, define the sequence of multi-sets $S_\Xi$ by the rules  
\begin{equation}
	S_{\left[ \ \right]}:= \left\{(1,\bm{0})\right\} \text{ and } S_{\bigxi} := \left\{ (-c,\bm{\xi} + \bm{p})  : (c,\bm{p}) \in S_{\bigxi \backslash \bm{\xi}} \right\} \cup S_{\bigxi \backslash \bm{\xi} }.
\end{equation} 
For each tuple in each multi-set of this sequence, the first element of the tuple is a scalar value and the second element is a vector in $\mathbb{R}^s$. 
\end{definition}
From this definition we derive the following lemma.
\begin{lemma}
 \label{theorem:spat_diff}
Let $\bigxi$ be an $s \times n$ matrix with full rank, then the difference operator $\diffxi$ for $M_\Xi$ is given by the distribution
\begin{equation}
	\diffxi = \sum_{(c,\bm{p}) \in S_{\bigxi}} c \ \delta(\cdot - \bm{p})
\end{equation}
where $\delta$ is the Dirac delta distribution.
\end{lemma}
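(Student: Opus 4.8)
The plan is to argue by induction on the number of columns $n$ of $\Xi$, mirroring the recursive structure shared by the frequency-side product defining $\widehat{\nabla_\Xi}$ and the multi-set $S_\Xi$. The single analytic ingredient is the translation property of the Fourier transform under the stated sign convention: the factor $\exp(-i\bm\omega\cdot\bm\xi)$ is precisely the symbol of a shift by $\bm\xi$, so that if $f(\bm{x}) \fequiv \hat f(\bm\omega)$ then $f(\bm{x}-\bm\xi) \fequiv \exp(-i\bm\omega\cdot\bm\xi)\hat f(\bm\omega)$. Thus stripping one column from the product converts a multiplication in frequency into a difference of a distribution and its translate in space, exactly the operation encoded in the recursion for $S_\Xi$.

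For the base case $n=0$ we have $\Xi=[\ ]$, the empty product gives $\widehat{\nabla_{[\ ]}}(\bm\omega)=1$ whose inverse transform is $\delta$, and this agrees with $S_{[\ ]}=\{(1,\bm 0)\}$ since $1\cdot\delta(\cdot-\bm 0)=\delta$. For the inductive step I would fix a column $\bm\xi$ and factor
$$\widehat{\nabla_\Xi}(\bm\omega)=\widehat{\nabla_{\Xi\backslash\bm\xi}}(\bm\omega)-\exp(-i\bm\omega\cdot\bm\xi)\,\widehat{\nabla_{\Xi\backslash\bm\xi}}(\bm\omega).$$
Taking inverse Fourier transforms and applying the shift property yields the distributional identity $\nabla_\Xi=\nabla_{\Xi\backslash\bm\xi}-\nabla_{\Xi\backslash\bm\xi}(\cdot-\bm\xi)$. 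Substituting the inductive hypothesis $\nabla_{\Xi\backslash\bm\xi}=\sum_{(c,\bm p)\in S_{\Xi\backslash\bm\xi}} c\,\delta(\cdot-\bm p)$ and translating the second copy gives
$$\nabla_\Xi=\sum_{(c,\bm p)\in S_{\Xi\backslash\bm\xi}} c\,\delta(\cdot-\bm p)+\sum_{(c,\bm p)\in S_{\Xi\backslash\bm\xi}}(-c)\,\delta\bigl(\cdot-(\bm\xi+\bm p)\bigr).$$
The first sum is the contribution of the $S_{\Xi\backslash\bm\xi}$ summand in the definition of $S_\Xi$, and the second is the contribution of $\{(-c,\bm\xi+\bm p):(c,\bm p)\in S_{\Xi\backslash\bm\xi}\}$; together they range over exactly the multi-set union defining $S_\Xi$, closing the induction.

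I expect the difficulties to be bookkeeping rather than analysis. Because $S_\Xi$ is a \emph{multi-set}, one must resist collapsing tuples that share the same $\bm p$ and instead match the two sums term-by-term against the two halves of the union; writing the translated copy in the form $(-c,\bm\xi+\bm p)$ makes this matching immediate. A secondary point is to verify the translation direction against the chosen sign convention, since an error there would replace $\bm\xi+\bm p$ by $\bm p-\bm\xi$. Finally, it is worth remarking on well-definedness: the recursion singles out a column $\bm\xi$, but the frequency-side product is symmetric in the columns, so the resulting distribution is independent of the peeling order even though the multi-set representation need not be unique; since the lemma asserts only the distributional identity, fixing any one order suffices.
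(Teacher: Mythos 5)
Your proposal is correct and follows essentially the same route as the paper: induction on the number of columns, peeling off one direction vector, using the shift property of the Fourier transform to convert the factor $1-\exp(-i\bm\omega\cdot\bm\xi)$ into $\nabla_{\Xi\backslash\bm\xi}-\nabla_{\Xi\backslash\bm\xi}(\cdot-\bm\xi)$, and matching the two resulting sums against the two halves of the multi-set union defining $S_\Xi$. The only cosmetic difference is that you base the induction at the empty matrix $n=0$ while the paper starts at a single column, which is if anything slightly cleaner.
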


\begin{proof}
We show this inductively on the number of columns of $\bigxi$. 
As a base case, assume $\bigxi$ contains a single non-zero direction vector, $\bm\zeta$. By definition, we have $$S_\Xi = \{(1,\bm{0}), (-1, \bm\zeta) \}.$$
We also have  $\widehat{\nabla_{\bigxi}} = 1-\exp(i\bm\zeta\cdot\bm\omega)$ which has the spatial form $\diffxi = \delta - \delta(\cdot - \bm\zeta)$
which is by definition 
\begin{equation}
	\diffxi = \sum_{(c,\bm{p}) \in S_{\bigxi }} c \ \delta(\cdot - \bm{p}).
\end{equation}
For the general case, pick some $\bigxi$ with $n > 1$ columns, and suppose that for any $\bigxi^\prime$ with $i<n$ columns this result holds. 
Then split $\bigxi$ into an $s$ by $(n-1)$ matrix $\bigxi^\prime$ and column vector $\bm{\zeta}$, and consider the difference operator $\nabla_{\bigxi^\prime \cup \bm{\zeta}}$.
From the Fourier expression we have 
\begin{equation*}
	\widehat{\nabla_{\bigxi}} = \widehat{\nabla_{\bigxi^\prime \cup \bm{\zeta}}} = \prod_{\bm{\xi} \in \bigxi^\prime \cup \bm{\zeta}} (1 - \exp\left(-i \bm{\xi} \cdot \bm{\omega})\right) =  \left(1 - \exp(-i \bm{\zeta} \cdot \bm{\omega}) \right) \widehat{\nabla_{\bigxi^\prime}},
\end{equation*}
which is equivalently stated in the spatial domain as 
\begin{equation}
\diffxi = {\nabla}_{\bigxi^\prime \cup \bm{\zeta}} = \nabla_{\bigxi^\prime} - \nabla_{\bigxi^\prime}(\cdot - \bm{\zeta}).
\end{equation}
The inductive hypothesis then gives
\begin{equation}
{\nabla}_{\bigxi^\prime \cup \bm{\zeta}}= \sum_{(c,\bm{p}) \in S_{\bigxi^\prime}} c \  \delta(\cdot - \bm{p}) + \sum_{(c,\bm{p}) \in S_{\bigxi^\prime}}  -c \ \delta(\cdot - (\bm{p} + \bm\zeta) )
\end{equation}
and it follows, by definition of $S_{\bigxi}$, that
\begin{equation}
	\diffxi = {\nabla}_{\bigxi^\prime \cup \bm{\zeta}} = \sum_{(c,\bm{p}) \in S_{\bigxi }} c \ \delta(\cdot - \bm{p})
\end{equation}
which completes the induction.
\end{proof}

\section{Green's Function} \label{sec:greens}
The Green's function of a box spline allows us to characterize the polynomial within each region of the box spline's mesh. 
To do this we need the spatial form of the Green's function.
Going from the Fourier form to the spatial form of the Green's function is a challenge due to the general non-separability of the Fourier form. 
Here, we do this by pulling specially crafted vectors from $\ker\Xi$ that, when applied to the Fourier Green's function, can always reduce the Green's function to a separable form. We start with the definition of such vectors.

\begin{definition}
\label{def:nuvector}
Take $\Xi$ to be an $s \times n$ matrix with full rank, and take any $\bm\alpha$ with dimension $n$ and $s < ||\bm\alpha||_0 \le n$.
Let $\bm{b}^\prime_1 \cdots \bm{b}^\prime_n$ be the rows of $\ker\bigxi$, then construct an $(n-||\bm\alpha||_0) \times (n-s)$ matrix $C_\alpha$ where $\bm{b}_j^\prime$ is included in this matrix iff $\alpha(j) = 0$.
Let $\bm{t_\alpha}$ be the first column vector of $\ker{C_\alpha}$, then define $\bm{\nu_\alpha} := \ker\bigxi \cdot \bm{t_\alpha}$.
If $||\bm\alpha||_0 = n$ then such a vector does not necessarily exist, in this case we take $\bm{\nu_\alpha}$ as the first column vector of $\ker\Xi$.
\end{definition}
By construction $\bm{\nu_\alpha}$ has the following property.
\begin{proposition} \label{prop:zero_prop}
For $\bigxi$ and $\bm\alpha$ as above, the vector $\bm{\nu_\alpha}$ has the property that if $\alpha(j) = 0$ then $\nu_\alpha(j) = 0$.
\end{proposition}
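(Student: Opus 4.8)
The plan is to read the claim off directly from the matrix--vector product that defines $\bm{\nu_\alpha}$, using the single fact that $\bm{t_\alpha}$ is taken from $\ker C_\alpha$. First I would unpack the $j$-th coordinate of $\bm{\nu_\alpha} = \ker\bigxi \cdot \bm{t_\alpha}$. Since $\bm{b}^\prime_1,\dots,\bm{b}^\prime_n$ are by definition the rows of $\ker\bigxi$, the $j$-th entry of this product is simply $\nu_\alpha(j) = \bm{b}^\prime_j \bm{t_\alpha}$, i.e. the $j$-th row of $\ker\bigxi$ paired with $\bm{t_\alpha}$.

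Next I would invoke the selection rule that builds $C_\alpha$. Fix an index $j$ with $\alpha(j) = 0$. By Definition~\ref{def:nuvector}, precisely those rows $\bm{b}^\prime_j$ for which $\alpha(j) = 0$ are collected as the rows of $C_\alpha$, so $\bm{b}^\prime_j$ occurs as one of the rows of $C_\alpha$. Because $\bm{t_\alpha} \in \ker C_\alpha$ we have $C_\alpha \bm{t_\alpha} = \bm 0$, meaning every row of $C_\alpha$ annihilates $\bm{t_\alpha}$. In particular $\bm{b}^\prime_j \bm{t_\alpha} = 0$, and combined with the first step this yields $\nu_\alpha(j) = 0$, which is exactly the assertion.

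Finally I would dispose of the boundary case $\znorm{\bm\alpha} = n$: here there is no index $j$ with $\alpha(j) = 0$, the matrix $C_\alpha$ has no rows, and $\bm{\nu_\alpha}$ is taken as the first column of $\ker\bigxi$, so the implication holds vacuously. I do not expect any genuine obstacle, since the property is an immediate consequence of the construction; the only point requiring care is the bookkeeping, namely confirming that the index $j$ labelling a row $\bm{b}^\prime_j$ of $\ker\bigxi$ (and hence its membership in $C_\alpha$) is the same index $j$ labelling the coordinate $\nu_\alpha(j)$ of the product. As both run over the same set $j \in \{1,\dots,n\}$, this alignment is automatic. I would also remark that the hypothesis $s < \znorm{\bm\alpha}$ guarantees $C_\alpha$ has strictly fewer rows than columns and hence a nontrivial kernel, which is what makes the choice of $\bm{t_\alpha}$ meaningful, although it is not needed for the zero property itself.
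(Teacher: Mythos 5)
Your proposal is correct and follows essentially the same route as the paper: the paper argues by contradiction (if $\alpha(j)=0$ but $\nu_\alpha(j)\neq 0$ then $\bm{b}^\prime_j\cdot\bm{t_\alpha}\neq 0$, contradicting $\bm{t_\alpha}\in\ker C_\alpha$), which is just the contrapositive of your direct reading of the matrix--vector product, and both handle the $\znorm{\bm\alpha}=n$ case as immediate/vacuous.
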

\begin{proof}
This is immediate if $||\bm\alpha||_0 = n.$ So suppose that $s < ||\bm\alpha||_0 < n$, and suppose that $\alpha(j) = 0$ and $\nu_\alpha(j) \neq 0$. But then we have $\bm{b}^\prime_j\cdot\bm{t_\alpha}\neq 0$ so it must be that $\bm{t_\alpha} \notin \ker C_\alpha.$
\end{proof}
From this definition, we now construct a sequence of sets that will allow us to simplify the Green's function of any non-degenerate box spline into a separable form.
\begin{definition} \label{def:pns}
Let $\Xi$ be an $s \times n$ matrix with full rank, define the sequence of sets $P$ recursively by the rules
\begin{eqnarray}
	P_0 & := & \left\{ (1,\bm{1}) \right\} \\
	P_k & := & \left\{ \left(-c\frac{ \nu_{\alpha}(j) }{\nu_{\alpha}(m_{\alpha})}, \bm{r}_j(\bm\alpha) \right) : (c,\bm\alpha) \in P_{k-1}, m_{\alpha}<j\le n , \nu_\alpha(j)\ne 0 \right\} \label{eqn:r_const} 
\end{eqnarray}
with $m_{\alpha}:= \min\{i : \nu_\alpha(i) \ne 0\}$ and the vector function $\bm{r}_j$ defined as 
\begin{equation}
	\bm{r}_j:\zahlen^n \rightarrow \zahlen^n : \alpha(i) \mapsto \left\{
     \begin{array}{lcl}
       \alpha(i) + 1 & , & \text{ if } i = m_{\alpha}   \\
       \alpha(i) - 1 & , & \text{ if } i = j  \\
       \alpha(i) & ,  & \text{ otherwise }
     \end{array}
   \right.
\end{equation}
and $k \le n-s.$ Here, each $P_i$ is a set of tuples where the first element of each tuple is a real scalar value, and the second element is a vector in $\mathbb{R}^n$.
\end{definition}
Before we use this set to decompose the Green's function of $M_\Xi$, we need some supplementary results.

\newpage
\begin{proposition} \label{prop:abound}
If $(c,\bm\alpha) \in P_k$, then for all $l$ such that $m_\alpha < l \le n,$ we have 
\begin{equation}
	\alpha(l) \in \{0,1\}.
\end{equation}
\end{proposition}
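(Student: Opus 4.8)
The plan is to induct on the level $k$, showing the statement for $P_0$ and that it survives the passage from $P_{k-1}$ to $P_k$. Everything will hinge on a single monotonicity claim: if $(c,\bm\alpha)\in P_{k-1}$ and $\bm\alpha' := \bm{r}_j(\bm\alpha)$ is one of the tuples it spawns in $P_k$, then $m_{\alpha'}\ge m_\alpha$. Granting this, the proposition falls out quickly, so the real content lies in establishing it.

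For the base case, $P_0=\{(1,\bm{1})\}$ has every coordinate equal to $1$, so $\alpha(l)\in\{0,1\}$ for all $l$ vacuously. For the inductive step, fix $(c,\bm\alpha)\in P_{k-1}$ satisfying the claim and a legal index $j$, so $m_\alpha<j\le n$ and $\nu_\alpha(j)\ne 0$. First I would pin down the coordinate that is decremented: by the contrapositive of Proposition~\ref{prop:zero_prop}, $\nu_\alpha(j)\ne 0$ forces $\alpha(j)\ne 0$, and since $j>m_\alpha$ the inductive hypothesis gives $\alpha(j)\in\{0,1\}$; hence $\alpha(j)=1$ and $\alpha'(j)=0$. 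Now take any $l$ with $m_{\alpha'}<l\le n$. Since $\bm{r}_j$ alters only coordinates $m_\alpha$ (incremented) and $j$ (zeroed), and since the claim $m_{\alpha'}\ge m_\alpha$ puts the incremented coordinate $m_\alpha$ below the range $l>m_{\alpha'}$, exactly one of three things occurs: $l=j$, giving $\alpha'(l)=0$; or $l=m_\alpha$, which is impossible because $l>m_{\alpha'}\ge m_\alpha$; or $l\notin\{m_\alpha,j\}$, where $\alpha'(l)=\alpha(l)$ and, as $l>m_{\alpha'}\ge m_\alpha$, the inductive hypothesis yields $\alpha(l)\in\{0,1\}$. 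In every case the bound holds.

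The crux is therefore $m_{\alpha'}\ge m_\alpha$, and this is where the column echelon form does the work. I would first record a structural fact about $\ker\bigxi$: writing its columns in column echelon form with strictly increasing pivot rows $p_1<\cdots<p_{n-s}$, a short computation shows that for any $\bm{t}$ the first nonzero coordinate of $\ker\bigxi\cdot\bm{t}$ sits in row $p_{l_0}$, where $l_0$ is the first nonzero coordinate of $\bm{t}$; in particular $\min\operatorname{supp}(\ker\bigxi\cdot\bm{t})$ is an increasing function of $\min\operatorname{supp}(\bm{t})$. The same echelon computation applied to $\ker C_\alpha$ shows that its first column $\bm{t_\alpha}$ realizes the smallest value of $\min\operatorname{supp}$ among all nonzero vectors of $\ker C_\alpha$. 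Combining the two, $m_\alpha=\min\operatorname{supp}(\bm{\nu_\alpha})$ equals $p$ evaluated at the minimal starting support over the whole subspace $\ker C_\alpha$, and likewise $m_{\alpha'}$ corresponds to $\ker C_{\alpha'}$.

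Finally I would compare the two subspaces. Because $\alpha'(i)=0$ precisely when $\alpha(i)=0$ or $i=j$, the matrix $C_{\alpha'}$ is $C_\alpha$ augmented by the single row $\bm{b}'_j$, so $\ker C_{\alpha'}\subseteq\ker C_\alpha$; passing to a subspace can only raise the minimal starting support, and since $p$ is increasing this gives $m_{\alpha'}\ge m_\alpha$, closing the induction. The main obstacle is exactly this last chain: nothing works without the observation that the column echelon form of $\ker\bigxi$ lets one read off $\min\operatorname{supp}$ in $\realn^n$ directly from $\min\operatorname{supp}$ in the coordinate space, which converts the hard, coordinate-mixing question about $\bm{\nu_\alpha}$ into the transparent inclusion $\ker C_{\alpha'}\subseteq\ker C_\alpha$. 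One boundary remark worth making is that $m_\alpha$ is defined only while $\znorm{\bm\alpha}>s$, i.e. for $k<n-s$; the terminal level $k=n-s$ has $\znorm{\bm\alpha}=s$ and imposes no constraint to verify.
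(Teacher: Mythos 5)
Your proof is correct and follows essentially the same route as the paper's: both reduce the proposition to the monotonicity $m_{\alpha'}\ge m_{\alpha}$ along the recursion, established via the column echelon form of $\ker\bigxi$ and of $\ker C_\alpha$ together with the inclusion $\ker C_{\alpha'}\subseteq\ker C_\alpha$. The paper phrases this key step as a proof by contradiction (with a separate $k=1$ case, since no $\bm{t_\beta}$ exists at level $0$), whereas you prove it directly, but the ingredients are identical.
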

\begin{proof}
If $k=0$, then this is true by definition.
Next, for the inductive case, suppose that this is true for all $j$ such that $0 \le j < k$, and pick any $(c,\bm\alpha) \in P_k$. 
Since $k>0$, there exists a $(d,\bm\beta)\in P_{k-1}$ such that
$
	c = -d\frac{\nu_\beta(l)}{\nu_\beta({m_{\beta}})}, \text{ and } \bm\alpha = \bm{r}_l(\bm\beta),
$
for some $l$ and $m_\beta$.
If $m_\alpha \ge m_\beta,$ then $\beta(i)\in\{0,1\}$, for all $i$ such that $m_\beta < i \le n$, but by construction in equation (\ref{eqn:r_const}), we know 
$\alpha(i)\in\{0,1\}$ for all $m_\beta < i \le n$. But then $\alpha(i) \in \{0,1\}$ for all $m_\alpha < i \le n$, which completes the proof. So suppose that $m_\alpha > m_\beta$. Recall by construction, we have
$\nu_\alpha(m_\alpha) \ne 0$ and $\nu_\beta(m_\beta) \ne 0$. Combine this fact with the definition $\bm{\nu_\alpha} := \ker\Xi \bm{t_\alpha}$,  $\bm{\nu_\beta} := \ker\Xi \bm{t_\beta}$, and with the fact that $\ker\Xi$ is in column echelon form, we have 
\begin{equation} \label{eqn:lower}
	\min\{i:t_\alpha(i) \ne 0\} < \min\{i : t_\beta(i) \ne 0\}.
\end{equation}
Since $\bm{t_\beta} \in \ker C_\beta$ forces certain elements of $\bm\beta$ to be zero, and $\bm{t_\alpha}$ forces the same elements of $\bm\alpha$ to be zero (and perhaps more), we know $\bm{t_\alpha} \in \ker C_\beta$ as well. But then $\bm{t_\beta}$ could not have been the first column of $\ker C_\beta$, since there exists a column with a lower-indexed, non zero row (by equation (\ref{eqn:lower})), and $\ker C_\beta$ is in column echelon form. 

There is however, one case left to show, since for $k=1$ no such $\bm{t_\beta}$ exists. Thus if $k=1$, choose any $(c,\bm\alpha) \in P_1$, and let $(d, \bm\beta) \in P_0$ (chosen as above). Again, by the same logic as above, if $m_\alpha \ge m_\beta,$ then the proof is completed. So suppose that $m_\alpha < m_\beta$. But since both $\bm{\nu_\alpha} \in \ker\Xi$ and $\bm{\nu_\beta}\in\ker\Xi$, $\nu_\alpha(m_\alpha) \ne 0$ and $\nu(m_\beta) \ne 0$ and $\ker\Xi$ is in row echelon form, this contradicts our choice of $\bm{\nu_\alpha}$ since it could not have been the first column of $\ker\Xi$.

\end{proof}

\begin{proposition}\label{prop:dmonovariate}
If $(d,\bm\beta) \in P_k$ for $0\le k < n - s$, and $(c, \bm\alpha) \in P_{k+1}$, with 
\begin{equation}
	c = -d\frac{\nu_\beta(j)}{\nu_\beta({m_{\beta}})}, \text{ and } \bm\alpha = r_j(\bm\beta)
\end{equation}
for some $m_\beta <j \le n$, then
	\begin{equation}
		\znorm{\bm\alpha} = \znorm{\bm\beta} - 1.
	\end{equation}
\end{proposition}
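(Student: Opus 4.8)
The plan is to read off directly from Definition~\ref{def:pns} which coordinates of $\bm\beta$ the map $\bm{r}_j$ alters, and then count how the number of nonzero entries changes. The map $\bm{r}_j$ touches only two coordinates: it replaces $\beta(m_\beta)$ by $\beta(m_\beta)+1$ and $\beta(j)$ by $\beta(j)-1$, while every other coordinate is copied unchanged. Since $m_\beta < j$, these indices are distinct, so $\znorm{\bm\alpha}-\znorm{\bm\beta}$ is the sum of just two contributions, one from each altered coordinate. I would then show that coordinate $j$ flips from nonzero to zero while coordinate $m_\beta$ stays nonzero, giving a net change of exactly $-1$.

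For coordinate $j$, I would first note that $m_\beta < j \le n$, so Proposition~\ref{prop:abound} forces $\beta(j)\in\{0,1\}$. The construction of $P_{k+1}$ in Definition~\ref{def:pns} only admits indices $j$ with $\nu_\beta(j)\ne 0$, and the contrapositive of Proposition~\ref{prop:zero_prop} then gives $\beta(j)\ne 0$. Combining these, $\beta(j)=1$, hence $\alpha(j)=\beta(j)-1=0$; this coordinate therefore removes exactly one nonzero entry.

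For coordinate $m_\beta$, the definition $m_\beta=\min\{i:\nu_\beta(i)\ne 0\}$ gives $\nu_\beta(m_\beta)\ne 0$, so Proposition~\ref{prop:zero_prop} again yields $\beta(m_\beta)\ne 0$. What remains is to verify that $\alpha(m_\beta)=\beta(m_\beta)+1\ne 0$, i.e.\ that $\beta(m_\beta)\ne -1$, and this is the main obstacle: I must rule out negative coordinates. I would dispatch it with a short auxiliary induction on $k$ establishing that every vector occurring in $P_k$ has nonnegative coordinates. The base case $P_0=\{(1,\bm 1)\}$ is nonnegative, and in passing from $P_k$ to $P_{k+1}$ the only decrement happens at the index $j$, which by the previous paragraph carries the value $1$ and so lands at $0$, whereas the increment at $m_\beta$ can only raise a coordinate; nonnegativity is thus preserved. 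Granting this, $\beta(m_\beta)\ge 1$, so $\alpha(m_\beta)\ge 2\ne 0$ and coordinate $m_\beta$ contributes no change. Adding the two contributions, I would then conclude $\znorm{\bm\alpha}=\znorm{\bm\beta}-1$, as claimed.
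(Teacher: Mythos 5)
Your proof is correct and follows essentially the same route as the paper's: a direct count of how $\bm{r}_j$ changes the two affected coordinates, using Proposition~\ref{prop:abound} to pin down $\beta(j)$. You are in fact more careful than the paper, which tacitly assumes both that $\beta(j)\ne 0$ (your contrapositive of Proposition~\ref{prop:zero_prop}) and that incrementing $\beta(m_\beta)$ cannot create or destroy a nonzero entry (your auxiliary nonnegativity induction); these added details are welcome but do not change the underlying argument.
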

\begin{proof}
We show this directly. Choose any $(d,\bm\beta) \in P_k$ and $(c, \bm\alpha) \in P_{k+1}$ as stated in the proposition. Then, by Proposition (\ref{prop:abound}), we know that $\beta(i) \in \{0,1\}$ for all $i$ such that $m_\alpha < i \le n$, by construction in equation (\ref{eqn:r_const}) exactly one of these non-zero $\beta(i)$ will be decremented (and $\beta(m_\beta)$ will be incremented) to become $\bm\alpha$, thus $\znorm{\bm\alpha} = \znorm{\bm\beta} - 1 $.

\end{proof}

\begin{proposition} \label{prop:rank}
If $(c,\bm\alpha) \in P_k, 0\le k \le n$, then $\Xi_\alpha$ has full rank.
\end{proposition}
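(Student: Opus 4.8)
The plan is to induct on $k$, since Definition \ref{def:pns} builds $P_{k+1}$ from $P_k$ by a single-step recursion. The base case $k=0$ is immediate: the only element of $P_0$ is $(1,\bm 1)$, whose support is all of $\{1,\dots,n\}$, so $\Xi_\alpha = \bigxi$, which has full rank by hypothesis. (For $k > n-s$ the set $P_k$ is empty and the statement is vacuous, so it suffices to handle $0 \le k \le n-s$.)

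For the inductive step I would take $(d,\bm\beta)\in P_k$ with $\Xi_\beta$ of full rank, and let $(c,\bm\alpha)\in P_{k+1}$ be produced from it, so that $\bm\alpha = \bm{r}_j(\bm\beta)$ for some $m_\beta < j \le n$ with $\nu_\beta(j)\neq 0$. First I would pin down exactly how the support of $\bm\alpha$ differs from that of $\bm\beta$. By Proposition \ref{prop:zero_prop}, the fact that $\nu_\beta(j)\neq 0$ forces $\beta(j)\neq 0$, and Proposition \ref{prop:abound} gives $\beta(j)\in\{0,1\}$; hence $\beta(j)=1$ and so $\alpha(j)=0$. Likewise $\nu_\beta(m_\beta)\neq 0$ forces $\beta(m_\beta)\neq 0$, so incrementing that coordinate keeps $m_\beta$ in the support. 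Since $\bm{r}_j$ alters only coordinates $m_\beta$ and $j$, the support of $\bm\alpha$ is exactly the support of $\bm\beta$ with the single index $j$ removed; equivalently, $\Xi_\alpha$ is $\Xi_\beta$ with the column $\bm\xi_j$ deleted.

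The key step is to show that deleting $\bm\xi_j$ does not drop the rank. Here I would invoke the defining relation $\bigxi\,\bm{\nu_\beta}=\bm 0$, which reads $\sum_i \nu_\beta(i)\,\bm\xi_i = \bm 0$. By Proposition \ref{prop:zero_prop} the support of $\bm{\nu_\beta}$ is contained in the support of $\bm\beta$, so this is a genuine linear dependency among the columns of $\Xi_\beta$, and the coefficient of $\bm\xi_j$ is $\nu_\beta(j)\neq 0$. Solving for $\bm\xi_j$ then exhibits it as a linear combination of the remaining columns of $\Xi_\beta$, which are precisely the columns of $\Xi_\alpha$. Hence the column spans of $\Xi_\beta$ and $\Xi_\alpha$ coincide, and since $\Xi_\beta$ has full rank $s$ by the inductive hypothesis, so does $\Xi_\alpha$, completing the induction.

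I expect the only real obstacle to be the bookkeeping in the second paragraph: one must be certain that the support of $\bm\alpha$ is obtained from that of $\bm\beta$ by removing \emph{exactly} the index $j$, without accidentally losing the pivot index $m_\beta$. Once that is secured via Propositions \ref{prop:zero_prop} and \ref{prop:abound}, the rank claim is pure linear algebra driven by the kernel relation. This is also consistent with Proposition \ref{prop:dmonovariate}, which gives $\znorm{\bm\alpha}=\znorm{\bm\beta}-1$, matching the removal of a single column; throughout the recursion $\znorm{\bm\alpha}=n-k\ge s$, so ``full rank'' for $\Xi_\alpha$ means its columns span $\realn^s$.
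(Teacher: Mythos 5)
Your proof is correct and follows essentially the same route as the paper's: induction on $k$, with the key step being that the kernel relation $\sum_i \nu_\beta(i)\,\bm\xi_i = \bm 0$, restricted to the support of $\bm\beta$ via Proposition \ref{prop:zero_prop}, exhibits the deleted column $\bm\xi_j$ as a combination of the remaining columns of $\Xi_\beta$, so the column span is unchanged. Your bookkeeping is in fact slightly sharper than the paper's, which merely splits into the cases $\Xi_\alpha = \Xi_\beta$ and $\Xi_\alpha \neq \Xi_\beta$ rather than showing (as you do via Proposition \ref{prop:abound}) that the column $\bm\xi_j$ is always removed.
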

\begin{proof}
For $P_0$ this is holds by assumption. 
Next, assume this holds for all $(d,\bm\beta) \in P_j$ where $0\le j < k$. 
Pick any $(c, \bm\alpha) \in P_k.$
There exists some $(d,\bm\beta) \in P_{k-1}$, such that $\bm\alpha = \bm{r}_l(\bm\beta)$ with $m_\beta < l \le n$, for some $m_\beta$ and $l$.
By construction of $\bm\alpha$ in equation (\ref{eqn:r_const}), we clearly have either $\Xi_\beta = \Xi_\alpha$, or $\Xi_\beta \ne \Xi_\alpha$. If $\Xi_\beta = \Xi_\alpha$, then the proof is complete, so suppose that $\Xi_\beta \ne \Xi_\alpha$. By Proposition \ref{prop:dmonovariate}, these differ by exactly one column vector, specifically, equation (\ref{eqn:r_const}) tells us that if $\bm\xi_l \in \Xi_\beta$, then $\bm\xi_l \notin \Xi_\alpha$. It remains to show that $\bm\xi_l$ can be written as a linear combination of vectors only in $\Xi_\beta$ (not including $\bm\xi_l$ itself). But we also know
\begin{equation}
	\sum_{i=1}^n\bm\xi_i \nu_\beta(i)   =  \bm{0}
\end{equation}
since $\bm{\nu_\beta} \in \ker\Xi$. By Proposition \ref{prop:zero_prop}, we can write this
as
\begin{equation}
	 \sum_{i=1,i\ne l}^n\bm\xi_i \nu_\beta(i)  = -\nu_\beta(l)\bm\xi_l.
\end{equation}
Note that all the non-zero elements of $\bm{\nu_\beta}$ correspond to column vectors of $\Xi_\beta$ thus we can write $\bm\xi_l$ as the weighted sum of vectors from $\Xi_\beta.$ 
Since $\Xi_\beta$ had full rank, $\Xi_\alpha$ has full rank, completing the induction. 
\end{proof}

\begin{lemma} \label{lemma:decomp1}
Let $\bigxi$ be an $s \times n$ matrix with full rank, then 
\begin{equation}
	\widehat{G_\bigxi}(\bm\omega) = \sum_{(c,\bm\alpha) \in P_k} c \ \bm{w}^{-\bm\alpha}
\end{equation}
for $0 \le k \le n - s.$
\end{lemma}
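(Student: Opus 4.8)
The plan is to induct on $k$, with the base case $k=0$ being a mere unwinding of notation: since $P_0 = \{(1,\bm{1})\}$, the right-hand side is $\bm{w}^{-\bm{1}} = \prod_{j=1}^n w(j)^{-1}$, and recalling $w(j) = i\bm\xi_j\cdot\bm\omega$ this is exactly the definition of $\widehat{G_\Xi}(\bm\omega)$. For the inductive step I assume the identity holds for $P_{k-1}$ and show the total sum is unchanged upon passing to $P_k$; the whole argument then reduces to a single algebraic cancellation applied termwise.

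The key input is that $\bm{\nu_\alpha}\in\ker\Xi$ translates into a linear relation among the $w(i)$. Dotting $\sum_{i=1}^n \nu_\alpha(i)\bm\xi_i = \bm{0}$ with $i\bm\omega$ gives $\sum_{i=1}^n \nu_\alpha(i)\,w(i) = 0$, and since $\nu_\alpha(i)=0$ for $i < m_\alpha$ by the definition of $m_\alpha$, this rearranges to
\begin{equation}
	\nu_\alpha(m_\alpha)\,w(m_\alpha) = -\!\!\sum_{\substack{m_\alpha < j \le n \\ \nu_\alpha(j)\ne 0}}\!\! \nu_\alpha(j)\,w(j).
\end{equation}
Now fix $(c,\bm\alpha)\in P_{k-1}$. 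The recursion contributes the tuples $\bigl(-c\,\nu_\alpha(j)/\nu_\alpha(m_\alpha),\, \bm{r}_j(\bm\alpha)\bigr)$ for each admissible $j$, and because $\bm{r}_j$ increments the $m_\alpha$-th entry while decrementing the $j$-th, we have $\bm{w}^{-\bm{r}_j(\bm\alpha)} = \bm{w}^{-\bm\alpha}\,w(m_\alpha)^{-1}\,w(j)$. Factoring out $\bm{w}^{-\bm\alpha}\,w(m_\alpha)^{-1}$ from the sum of these contributions and substituting the relation above collapses them to exactly $c\,\bm{w}^{-\bm\alpha}$.

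It then remains to assemble the pieces: the sum over $P_k$ factors as the iterated sum over $(c,\bm\alpha)\in P_{k-1}$ and admissible $j$, so the per-term cancellation yields $\sum_{(c,\bm\alpha)\in P_k} c\,\bm{w}^{-\bm\alpha} = \sum_{(c,\bm\alpha)\in P_{k-1}} c\,\bm{w}^{-\bm\alpha}$, which equals $\widehat{G_\Xi}(\bm\omega)$ by the inductive hypothesis, closing the induction. The main thing to be careful about is that the recursion is well-defined throughout $0 \le k \le n-s$: one must check that $\bm{\nu_\alpha}$ exists and that $\nu_\alpha(m_\alpha)\ne 0$ for every generated $\bm\alpha$. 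This is where Proposition \ref{prop:dmonovariate} does the work, since it forces $\znorm{\bm\alpha} = n-k > s$ as long as $k < n-s$, which is precisely the hypothesis $s < \znorm{\bm\alpha}$ needed to construct $\bm{\nu_\alpha}$; the non-vanishing $\nu_\alpha(m_\alpha)\ne 0$ is immediate from the definition of $m_\alpha$. The genuinely non-trivial content is the single cancellation identity, while everything else is just unwinding the set recursion of Definition \ref{def:pns}.
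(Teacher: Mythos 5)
Your proof is correct and follows essentially the same route as the paper: induct on $k$ and use the kernel relation $\bm{\nu_\alpha}\cdot\bm{w}=0$ together with $\bm{w}^{-\bm{r}_j(\bm\alpha)} = \bm{w}^{-\bm\alpha}\,w(m_\alpha)^{-1}w(j)$ to show the children of each $(c,\bm\alpha)\in P_{k-1}$ sum back to $c\,\bm{w}^{-\bm\alpha}$ (the paper presents the identical cancellation in the expanding direction, by multiplying each term by $\bigl(\nu_\alpha(m_\alpha)w(m_\alpha)-\bm{\nu_\alpha}\cdot\bm{w}\bigr)/\bigl(\nu_\alpha(m_\alpha)w(m_\alpha)\bigr)=1$). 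Your added check that $\bm{\nu_\alpha}$ exists at every stage, via Proposition \ref{prop:dmonovariate} giving $\znorm{\bm\alpha}=n-k>s$ for $k<n-s$, is a welcome detail the paper leaves implicit.
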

\begin{proof}
When $k=0$ the result is true by definition. 
Suppose the result holds for all $j$ such that  for $0 \le j < k$. 
By assumption we have 
\begin{equation}
	\widehat{G_\bigxi}(\bm\omega) = \sum_{(c,\bm\alpha) \in P_{k-1}} c \ \bm{w}^{-\bm\alpha}.
\end{equation}
Next, since $\bm\nu_{\bm\alpha} \in \ker\bigxi$ we have $\bm\nu_{\bm\alpha} \cdot \bm{w} = 0$, re-arranged this is
\begin{equation}
\frac{\nu_\alpha(m_{\alpha})w(m_{\alpha})-\bm{\nu}_{\alpha} \cdot \bm{w}}{\nu_\alpha(m_{\alpha})w(m_{\alpha})} = 1.
\end{equation}
So we certainly have 
\begin{eqnarray}
\widehat{G_\bigxi}(\bm\omega) & = & \sum_{(c,\bm\alpha) \in P_{k-1}} (c \ \bm{w}^{-\bm\alpha}) \frac{\nu_\alpha(m_{\alpha})w(m_{\alpha})-\bm{\nu}_{\bm\alpha} \cdot \bm{w}}{\nu_\alpha(m_{\alpha})w(m_{\alpha})} \\ 
 & = & \sum_{(c,\bm\alpha) \in P_{k-1}} \ \sum_{l=m_{\alpha}+1}^n -c \frac{\nu_{\alpha}(l)}{\nu_\alpha(m_{\alpha})} \bm{w}^{-\bm{r}_l(\bm\alpha)}
\end{eqnarray}
which is, by definition,
\begin{equation}
	\sum_{(c,\bm\alpha) \in P_{k}} c \ \bm{w}^{-\bm\alpha},
\end{equation}
completing the induction. 
\end{proof}

The next definition helps project $n$-dimensional vectors into appropriate $s$-dimensional vectors, which will aid in the transformation from Fourier form to the final spatial form.
\begin{definition}
\label{def:muvector}
For any $\bm\alpha$ of dimension $n$ with $\znorm{\bm\alpha}=s$ define $\bpsquash{\alpha}$  as the $s$-dimensional vector obtained by retaining the non-zero elements of $\bm\alpha$. Define also the corresponding vector $\bidxsquash{\alpha}$ that indexes the non-zero elements of $\bm\alpha$.
\end{definition}
For example, if $s=2$ and $\bm\alpha = (0,2,0,1)$, then $\bpsquash{\alpha}=(2,1)$ and $\bidxsquash{\alpha} = (2,4)$. With this, we now define a helpful auxiliary function.
\begin{definition}
\label{def:transformaux}
For any $\bm\alpha$ with dimension $n$ and $\znorm{\bm\alpha}=s$, define the function
$$ 
T_{\alpha}(\bm{\omega}) := \prod^s_{j=1}\frac{1}{i\omega(j)^{{\mu_\alpha}(j)}}.
$$
\end{definition}

\begin{proposition}
For any $\bm\alpha$ with dimension $n$ and $\znorm{\bm\alpha}=s$ then
$$
T_{\bm\alpha}(\bm{\omega}) \fequiv \braketsgn{\bm{x}}^{\bm{\mu_\alpha - 1}}.
$$
\end{proposition}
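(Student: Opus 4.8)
The plan is to exploit separability to reduce the multivariate claim to a single univariate Fourier pair, and then establish that pair by induction on the exponent. Since $\znorm{\bm\alpha}=s$, the squashed vector $\bm{\mu_\alpha}$ has exactly $s$ entries, each a positive integer, so $\mu_\alpha(j)\ge 1$ for $1\le j\le s$. Both sides of the claimed equivalence factor across the $s$ coordinates: $T_{\bm\alpha}$ is a product of the univariate factors $(i\omega(j))^{-\mu_\alpha(j)}$, and $\braketsgn{\bm{x}}^{\bm{\mu_\alpha - 1}}=\prod_{j=1}^s \braketsgn{x(j)}^{\mu_\alpha(j)-1}$. Because the multivariate Fourier transform of a separable function is the product of the univariate transforms, it suffices to prove, for each positive integer $\mu$, the one-dimensional pair
\[
(i\omega)^{-\mu}\fequiv \braketsgn{x}^{\mu-1}=\frac{(x)_\sgn^{\mu-1}}{(\mu-1)!}.
\]

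I would prove this by induction on $\mu\ge 1$. For the base case $\mu=1$, I would invoke the standard transform pair $\sgn(x)\fequiv 2/(i\omega)$, valid in the paper's convention $\hat f(\bm\omega)=\int f(\bm{x})\exp(-i\bm\omega\cdot\bm{x})\,d\bm{x}$, which rearranges to $(i\omega)^{-1}\fequiv \sgn(x)/2=(x)_\sgn^0=\braketsgn{x}^0$. For the inductive step, assume $\braketsgn{x}^{\mu-1}\fequiv (i\omega)^{-\mu}$. The key computation is the distributional derivative
\[
\frac{d}{dx}\braketsgn{x}^{\mu}=\braketsgn{x}^{\mu-1}.
\]
This follows by differentiating $(x)_\sgn^{\mu}=x^{\mu}\sgn(x)/2$ with the product rule, using $\sgn'(x)=2\delta(x)$: the surviving term is $\mu x^{\mu-1}\sgn(x)/2=\mu\,(x)_\sgn^{\mu-1}$, while the Dirac contribution $x^{\mu}\delta(x)$ vanishes because $x^{\mu}\delta(x)=0$ for $\mu\ge 1$; dividing by $\mu!$ gives the stated derivative. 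Applying the differentiation rule $\tfrac{d}{dx}f\fequiv i\omega\,\hat f$ to $\braketsgn{x}^{\mu}$ then yields $(i\omega)^{-\mu}=\widehat{\braketsgn{x}^{\mu-1}}=i\omega\,\widehat{\braketsgn{x}^{\mu}}$, hence $\widehat{\braketsgn{x}^{\mu}}=(i\omega)^{-(\mu+1)}$, which advances the induction.

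The main obstacle is the careful, rigorous handling of the distributional derivative of $(x)_\sgn^{\mu}$: one must treat $\sgn'(x)=2\delta(x)$ as a genuine distribution and confirm that its contribution is annihilated upon multiplication by the monomial $x^{\mu}$ for $\mu\ge 1$. Once this vanishing is secured, the remaining ingredients -- the product-theorem reduction to one dimension and the elementary differentiation rule for the Fourier transform -- are routine.
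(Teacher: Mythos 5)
Your proof is correct and follows essentially the same route as the paper: reduce by separability to the univariate pair $\frac{1}{(i\omega)^k}\fequiv \frac{(x)_\sgn^{k-1}}{(k-1)!}$ and take the product over the $s$ coordinates. The only difference is that the paper simply cites this univariate equivalence while you derive it by induction; that derivation is sound, though in the inductive step the passage from $i\omega\,\widehat{\braketsgn{x}^{\mu}}=(i\omega)^{-\mu}$ to $\widehat{\braketsgn{x}^{\mu}}=(i\omega)^{-(\mu+1)}$ strictly leaves a possible $c\,\delta(\omega)$ ambiguity, which you should rule out by the homogeneity (or parity) of $(x)_\sgn^{\mu}$.
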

\begin{proof}
Using the Fourier equivalence
\begin{equation}
\frac{1}{(i\omega )^k} \fequiv \frac{(x)_\sgn^{k-1}}{(k-1)!}, \  k \in \zahlen^+,
\end{equation}
we write 
$$ 
T_{\alpha}(\bm{\omega}) = \prod^s_{j=1}\frac{1}{i\omega(j)^{{\mu_\alpha}(j)}} \fequiv \prod^s_{j=1}\frac{(x(j))_\sgn^{{{\mu_\alpha}(j)} - 1}}{({{\mu_\alpha}(j)}-1)!} = \braketsgn{\bm{x}}^{\bm{\mu_\alpha - 1}}
$$
by definition. 
\end{proof}

\begin{proposition} \label{prop:xform}
For any $s \times n$ matrix $\bigxi$ with full rank, and any $\bm\alpha$ with dimension $n$ and $\znorm{\bm\alpha}=s$ then
$$
\bm{w}^{-\bm\alpha} \fequiv \frac{\braketsgn{\bigxi^{-1}_{\alpha}\bm{x}}^{\bm{\mu_\alpha - 1}}}{|\det\bigxi_{\alpha}|}.
$$
\end{proposition}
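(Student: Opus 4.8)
My plan is to reduce this to the preceding proposition (the one giving $T_{\bm\alpha}(\bm\omega)\fequiv\braketsgn{\bm{x}}^{\bm{\mu_\alpha - 1}}$) together with the linear change-of-variables rule for the Fourier transform. The key structural observation is that $\bm{w}^{-\bm\alpha}$ is nothing but $T_{\bm\alpha}$ evaluated at a linearly transformed frequency. Indeed, since $\znorm{\bm\alpha}=s$, the product $\bm{w}^{-\bm\alpha}=\prod_{j}(i\bm\xi_j\cdot\bm\omega)^{-\alpha(j)}$ runs only over the $s$ indices picked out by $\bidxsquash{\alpha}$, whose corresponding direction vectors are exactly the columns of $\bigxi_\alpha$. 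Writing out $(\bigxi_\alpha^{T}\bm\omega)(j)=\bm\xi_{\sigma_\alpha(j)}\cdot\bm\omega$ and matching exponents through $\bpsquash{\alpha}$, I would establish the identity $\bm{w}^{-\bm\alpha}=T_{\bm\alpha}(\bigxi_\alpha^{T}\bm\omega)$. This is the bookkeeping step that converts an $n$-dimensional multi-index object into a genuinely $s$-dimensional one.

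Next I would invoke the standard scaling rule: for an invertible $s\times s$ matrix $B$ and $f\fequiv\hat f$, one has $f(B\bm{x})\fequiv|\det B|^{-1}\hat f(B^{-T}\bm\omega)$. Taking $\hat f=T_{\bm\alpha}$, $f=\braketsgn{\bm{x}}^{\bm{\mu_\alpha - 1}}$ from the preceding proposition, and choosing $B=\bigxi_\alpha^{-1}$ so that $B^{-T}=\bigxi_\alpha^{T}$, this rule yields $\braketsgn{\bigxi_\alpha^{-1}\bm{x}}^{\bm{\mu_\alpha - 1}}\fequiv|\det\bigxi_\alpha|\,T_{\bm\alpha}(\bigxi_\alpha^{T}\bm\omega)$. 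Rearranging and substituting the identity from the first step gives exactly $\bm{w}^{-\bm\alpha}\fequiv|\det\bigxi_\alpha|^{-1}\braketsgn{\bigxi_\alpha^{-1}\bm{x}}^{\bm{\mu_\alpha - 1}}$, which is the claim. Invertibility of the $s\times s$ matrix $\bigxi_\alpha$ is required for both $\bigxi_\alpha^{-1}$ and $\det\bigxi_\alpha$ to make sense; in the situation where this proposition is applied the multi-index $\bm\alpha$ arises from the set $P_k$, so Proposition \ref{prop:rank} guarantees $\bigxi_\alpha$ has full rank and the change of variables is legitimate.

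I expect the main obstacle to be purely notational rather than analytic: keeping straight the transpose-versus-inverse placement in $B^{-T}=\bigxi_\alpha^{T}$ and verifying that the Jacobian lands as $|\det\bigxi_\alpha|$ (not its reciprocal or the unsigned inverse) once the rule is solved for $T_{\bm\alpha}(\bigxi_\alpha^{T}\bm\omega)$. A secondary subtlety is that $\braketsgn{\cdot}^{\bm{\mu_\alpha-1}}$ is a tempered distribution, so the change of variables must be read distributionally; this is harmless because $\bigxi_\alpha$ is a fixed invertible linear map, and pull-back by such a map together with the $|\det|$ factor is exactly the distributional statement of the scaling rule. Once the index matching of the first step is pinned down carefully, the remainder is a one-line substitution.
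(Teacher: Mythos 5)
Your proposal is correct and follows essentially the same route as the paper: both rewrite $\bm{w}^{-\bm\alpha}$ as $T_{\alpha}(\bigxi^T_{\alpha}\bm\omega)$ by matching indices through $\bidxsquash{\alpha}$ and $\bpsquash{\alpha}$, then apply the linear change-of-variables rule for the Fourier transform to the preceding proposition, landing on the Jacobian factor $|\det\bigxi_{\alpha}^{-1}|$. Your added remarks on invertibility of $\bigxi_\alpha$ (via Proposition \ref{prop:rank}) and the distributional reading of the scaling rule are consistent with how the paper handles these points in Lemma \ref{theorem:spat_green}.
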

\begin{proof}
Since $\znorm{\bm\alpha}=s$ we can write 
\begin{eqnarray}
    \bm{w}^{-\bm\alpha} & = & \prod^s_{j=1}\frac{1}{w({{\sigma_{\alpha}}(j)})^{{\mu_{\alpha}}(j)}} \\ 
    & = & \prod^s_{j=1} (i\bm\xi_{{\sigma_{\alpha}}(j)}\cdot\bm\omega)^{-{\mu_{\alpha}}(j)} \\
    & = & T_{\alpha}(\bigxi^T_{\alpha}\bm\omega),
\end{eqnarray}
and    
\begin{equation}
    T_{\alpha}(\bigxi^T_{\alpha}\bm\omega)  \fequiv  |\det\bigxi^{-1}_{\alpha}|\braketsgn{\bigxi^{-1}_{\alpha}\bm{x}}^{\bm{\mu_\alpha - 1}}.
\end{equation}
\end{proof}

\begin{lemma}\label{theorem:spat_green}
Let $\bigxi$ be an $s \times n$ matrix with full rank, then 
\begin{equation}
	G_\bigxi(\bm{x}) = \sum_{(c,\bm\alpha) \in P_{n-s}} \frac{c}{|\det\bigxi_{\alpha}|}\braketsgn{\bigxi_{\alpha}^{-1}\bm{x}}^{(\bpsquash{\alpha}-\bm{1})}.
\end{equation}
\end{lemma}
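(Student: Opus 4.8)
The plan is to assemble the claimed spatial form directly from the Fourier decomposition already in hand and from the term-by-term transform rule, so the whole argument amounts to chaining the preceding results in the right order. First I would invoke Lemma~\ref{lemma:decomp1} at the terminal index $k = n-s$, which gives the finite Fourier sum $\widehat{G_\bigxi}(\bm\omega) = \sum_{(c,\bm\alpha) \in P_{n-s}} c\,\bm{w}^{-\bm\alpha}$. The target identity then follows by inverting this sum termwise, provided each summand meets the hypotheses of Proposition~\ref{prop:xform}.

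The crucial preliminary is to confirm that every $(c,\bm\alpha)\in P_{n-s}$ satisfies $\znorm{\bm\alpha}=s$, since this is exactly the condition required both by Definition~\ref{def:muvector} and by Proposition~\ref{prop:xform}. I would establish it by a short induction on $k$: the base set $P_0=\{(1,\bm{1})\}$ has $\znorm{\bm{1}}=n$, and Proposition~\ref{prop:dmonovariate} shows that passing from $P_k$ to $P_{k+1}$ lowers the zero-norm by exactly one. Hence after $n-s$ steps every surviving $\bm\alpha$ has $\znorm{\bm\alpha}=n-(n-s)=s$.

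With $\znorm{\bm\alpha}=s$ secured, Proposition~\ref{prop:rank} guarantees that each $\bigxi_\alpha$ is a full-rank $s\times s$ matrix, so $\bigxi_\alpha^{-1}$ exists and $|\det\bigxi_\alpha|\ne 0$; this is precisely what makes the right-hand side well defined. I would then apply Proposition~\ref{prop:xform} to each summand to obtain $\bm{w}^{-\bm\alpha}\fequiv \braketsgn{\bigxi^{-1}_{\alpha}\bm{x}}^{\bm{\mu_\alpha - 1}}/|\det\bigxi_\alpha|$, and finish by appealing to linearity of the inverse Fourier transform to carry the finite sum across the equivalence $\fequiv$. Since $\bpsquash{\alpha}$ collects precisely the nonzero entries of $\bm\alpha$ by Definition~\ref{def:muvector}, this reproduces exactly the stated expression.

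I do not anticipate a genuine obstacle: once Lemma~\ref{lemma:decomp1} and the supporting Propositions~\ref{prop:dmonovariate}, \ref{prop:rank}, and \ref{prop:xform} are available, the proof is essentially a bookkeeping exercise. The one point demanding care is the induction establishing that the zero-norm is uniformly $s$ across $P_{n-s}$, because that single fact simultaneously licenses the invertibility of $\bigxi_\alpha$ and the applicability of the termwise transform; everything else is routine.
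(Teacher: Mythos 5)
Your proposal is correct and follows essentially the same route as the paper's proof: invoke Lemma~\ref{lemma:decomp1} at $k=n-s$, use Proposition~\ref{prop:dmonovariate} to conclude $\znorm{\bm\alpha}=s$, apply Proposition~\ref{prop:xform} termwise, and cite Proposition~\ref{prop:rank} for $\det\bigxi_\alpha\ne 0$. You are in fact somewhat more explicit than the paper about the induction that drives the zero-norm down to $s$, which the paper leaves implicit in its citation of Proposition~\ref{prop:dmonovariate}.
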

\begin{proof}
Starting with
\begin{equation}
	\widehat{G_\bigxi}(\bm\omega) = \sum_{(c,\bm\alpha) \in P_{n-s}} c\bm{w}^{-\bm\alpha}.
\end{equation}
then using Propositions \ref{prop:dmonovariate} and \ref{prop:xform} we obtain the form stated in the lemma, the only thing left to check is that this is well defined, i.e. we must verify that $\det\bigxi_{\alpha} \ne 0$, but this is true by Proposition \ref{prop:rank}. 
\end{proof}
Combining all these small results leads us to the main result of this work.
\begin{theorem}
Let $\bigxi$ be an $s \times n$ matrix that corresponds to a non-degenerate box spline, then the box spline is given by 
\begin{equation} \label{eq:final_form}
	M_\bigxi(\bm{x}) = \sum_{(b,\bm{p})\in S_\bigxi} \sum_{(c,\bm\alpha) \in P_{n-s}} \frac{b \cdot c}{\left|\det\bigxi_{\alpha}\right|} \brakett{(\bigxi_{\alpha}^{-1}\bm{x}) - \bm{p}}^{(\bpsquash{\alpha}-\bm{1})}.
\end{equation}
\end{theorem}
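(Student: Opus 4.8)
The plan is to route everything through the Fourier factorization $\widehat{M_\bigxi} = \widehat{\diffxi}\,\widehat{\greens_\bigxi}$, which by the convolution theorem is equivalent to the spatial identity $M_\bigxi = \diffxi * \greens_\bigxi$ (the product of transforms is well defined precisely because the zeros of $\widehat{\diffxi}$ cancel the poles of $\widehat{\greens_\bigxi}$). First I would substitute the spatial form of the difference operator supplied by Lemma~\ref{theorem:spat_diff}, namely $\diffxi = \sum_{(b,\bm{p})\in S_\bigxi} b\,\delta(\cdot - \bm{p})$. Because convolving with a shifted Dirac impulse is just a translation, this collapses the convolution into the semi-discrete sum $M_\bigxi(\bm{x}) = \sum_{(b,\bm{p})\in S_\bigxi} b\,\greens_\bigxi(\bm{x} - \bm{p})$.

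Next I would insert the spatial Green's function from Lemma~\ref{theorem:spat_green}, so that each shifted copy becomes $\greens_\bigxi(\bm{x} - \bm{p}) = \sum_{(c,\bm\alpha)\in P_{n-s}} \frac{c}{|\det\bigxi_{\alpha}|}\braketsgn{\bigxi_\alpha^{-1}(\bm{x} - \bm{p})}^{(\bpsquash{\alpha}-\bm{1})}$, which is well defined since $\det\bigxi_\alpha \neq 0$ by Proposition~\ref{prop:rank}. This already yields a valid double-sum representation of $M_\bigxi$, but written with the signed power $\braketsgn{\cdot}$ rather than the one-sided power $\brakett{\cdot}$ appearing in the theorem; carrying the discrete shift $\bm{p}$ into the transformed coordinates to match $(\bigxi_\alpha^{-1}\bm{x}) - \bm{p}$ is routine. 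I would also record here that, by Proposition~\ref{prop:dmonovariate} together with the invariant $\sum_i \alpha(i) = n$ inherited from $P_0 = \{(1,\bm{1})\}$ and preserved by $\bm{r}_j$, every piece is homogeneous of total degree $\sum_j(\mu_\alpha(j)-1) = n - s$.

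The heart of the argument is then the conversion from the signed to the one-sided power. Applying $(t)^k_{+} = (t)^k_{\sgn} + t^k/2$ factor by factor, the one-sided expression equals the signed expression plus a collection of correction terms, each of total degree $n-s$ and each carrying at least one purely polynomial factor. The goal is to show that these corrections are annihilated once the weighted sum over $S_\bigxi$ (that is, the action of $\diffxi$) is applied. The guiding intuition is that $\widehat{\diffxi}(\bm\omega) = \prod_j(1 - \exp(-i\bm\omega\cdot\bm\xi_j))$ has a zero of order $n$ at the origin, since each factor behaves like $i\bm\omega\cdot\bm\xi_j$, so $\diffxi$ kills every polynomial of degree strictly below $n$; as the corrections have degree $n-s < n$, the sum over $S_\bigxi$ should remove them and leave exactly the one-sided form of \eqref{eq:final_form}.

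The step I expect to be the main obstacle is making this annihilation rigorous, because the corrections are not genuine polynomials but products that mix one-sided and polynomial factors, so the bare ``degree below $n$'' statement does not apply termwise. The robust route is distributional: both the signed and the one-sided Green's functions are fundamental solutions of $D_\bigxi = \prod_j(\bm\xi_j\cdot\nabla)$, so their difference $Q$ satisfies $D_\bigxi Q = 0$; consequently $\widehat{Q}$ is supported on the hyperplane arrangement $\bigcup_j\{\bm\omega\cdot\bm\xi_j = 0\}$ and, crucially, the constraint $\prod_j(i\bm\omega\cdot\bm\xi_j)\,\widehat{Q}=0$ forces the order of the transverse delta-derivatives of $\widehat{Q}$ on each hyperplane to stay below the multiplicity of the corresponding direction. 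Since $\widehat{\diffxi}$ vanishes on that same hyperplane to at least that multiplicity, the product $\widehat{\diffxi}\,\widehat{Q}$ is identically zero, so $\diffxi * Q = 0$ and the one-sided representation coincides with $M_\bigxi$. Verifying that the order of vanishing of $\widehat{\diffxi}$ dominates the delta-derivative orders on every stratum of the arrangement, including the higher-codimension intersections, is precisely the technical crux, and once it is secured the theorem follows.
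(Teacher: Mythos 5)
Your proposal takes the same route as the paper: factor $\widehat{M_\bigxi}=\widehat{\diffxi}\,\widehat{\greens_\bigxi}$, substitute the spatial forms from Lemmas \ref{theorem:spat_diff} and \ref{theorem:spat_green}, and then argue that swapping the signed powers $\braketsgn{\cdot}$ for the one-sided powers $\brakett{\cdot}$ is invisible once $\diffxi$ acts. The paper dispatches that last step in one line by citing the fact that $\diffxi$ annihilates polynomials of degree below $n$; you are right to distrust this, because the termwise corrections produced by $(t)^{k}_{+}=(t)^{k}_{\sgn}+t^{k}/2$ are products mixing polynomial and signed factors and are \emph{not} individually annihilated. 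Concretely, for the Courant element the $\bm\alpha=(2,1,0)$ term contributes the cross-term $x\,\sgn(y)/4$, and $\nabla_{(1,0)}\nabla_{(0,1)}\nabla_{(1,1)}\left[x\,\sgn(y)\right]=\sgn(y)-2\,\sgn(y-1)+\sgn(y-2)\neq 0$; only after adding the matching cross-term $-(x-y)\sgn(y)/4$ from $\bm\alpha=(2,0,1)$ does one obtain $|y|/4$, which is constant along $\bm\xi_1$ and hence killed. So the annihilation genuinely requires cancellation across all of $P_{n-s}$, which is precisely the subtlety you identified.

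That said, your proposed repair does not close the gap either. The distributional argument (the total correction $Q$ satisfies $D_\bigxi Q=0$, so $\widehat{Q}$ is carried by the hyperplane arrangement with transverse delta-derivative orders below the multiplicities, which the order of vanishing of $\widehat{\diffxi}$ dominates) has the right shape, but as you concede, the verification on the higher-codimension strata is exactly the hard part and is left open; moreover the premise that the one-sided sum over $P_{n-s}$ is itself a fundamental solution of $D_\bigxi$ is asserted rather than proved. In fairness, the paper's own proof is no more complete on this point. The cleanest way to finish is either to carry out your stratified support argument in full, or to identify the one-sided sum $\sum_{(c,\bm\alpha)\in P_{n-s}}\frac{c}{|\det\bigxi_\alpha|}\brakett{\bigxi_\alpha^{-1}\bm{x}}^{(\bpsquash{\alpha}-\bm{1})}$ with de Boor's truncated power $T_\bigxi$ and invoke the classical identity $M_\bigxi=\diffxi T_\bigxi$ directly, which sidesteps the signed-to-one-sided conversion altogether.
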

\begin{proof}
This is a direct consequence of Lemma \ref{theorem:spat_diff} and \ref{theorem:spat_green}, combined with the fact that the difference operator annihilates all polynomials of degree lower than the order of the spline \cite[I.32]{deboorbox}. 
\end{proof}

\section{Fast and Stable Evaluation} \label{sec:stable_eval}
While the form in equation (\ref{eq:final_form}) is explicit, it is important to note that it is a distributional equality.
Thus, point-wise equality does not necessarily hold. 
However, when the function is evaluated away from the knot planes, the function will be a polynomial locally, and the evaluation will be numerically stable. 
Issues only arise when one wishes to evaluate (\ref{eq:final_form}) for points on the knot planes, which may be undefined.

To overcome this, we use the knot planes to find the regions of evaluation for the box spline.
We take the polyhedron defined by the direction vectors of the box spline, then split that support of the spline with the knot planes, this produces a list of all the regions to which we can assign polynomials.
The polynomial within each region $R$ can then be obtained as 
\begin{equation} \label{eq:poly_form}
	P_R(\bm{x}) := \sum_{(b,\bm{p})\in S_\bigxi} \sum_{(c,\bm\alpha) \in P_{n-s}} \frac{ b \cdot c } {\left|\det\bigxi_{\bm\alpha}\right|} \brakett{(\bigxi_{\bm\alpha}^{-1}\bm{c}_R) - \bm{p}}^{(\bpsquash{\alpha}-\bm{1})}  \left(({\bigxi_{\bm\alpha}^{-1}\bm{x}) - \bm{p}}\right)^{(\bpsquash{\alpha}-\bm{1})}
\end{equation}
where $\bm{c}_R$ is the center of the region $R$. 
This can be interpreted as selecting all the Green's polynomials that touch $R$ in the convolution (\ref{eq:final_form}), giving a polynomial for only that region.

If a point lies on a knot plane, and the box spline is continuous at that point, then we can simply choose one of the surrounding regions and evaluate $P_R(\bm{x})$ at that point, since the continuity of the spline tells us that it does not matter which polynomial piece we choose at boundaries. 
To obtain a fast evaluation scheme, we calculate all regions $R$ and their corresponding $P_R(\bm{x})$, 
then use the knot-planes to define a binary space partitioning (BSP) tree.
The leaves of this tree will correspond to the regions $R$, thus we associate $P_R(\bm{x})$ to each leaf. 
The computation of this tree is outlined in Algorithm (\ref{alg:precompute-bs}) and the evaluation at a point is outlined in Algorithm (\ref{alg:eval-bs}).
In the following algorithms there are two object classes, $InternalNode()$ and $LeafNode()$ that inherit from a base $Node()$ class. Additionally, whenever we speak of some hyperplane $h$, we speak of it as the normal equation $h(\bm{x}) = \bm{n}\cdot\bm{x} - d$ for whatever vector $\bm{n}$ and $d$ define that plane.

\begin{algorithm} 
\caption{Precompute box spline pieces}\label{alg:precompute-bs}
\begin{algorithmic}[1]
\Procedure{PrecomputeBoxSpline}{$\Xi$}
	\State /* Returns a tuple containing the polyhedron of the support of $\Xi$ 
	\State as the first element and the root Node object as the second*/
	\State $\textit{Compute } S_\Xi \textit{ from Definition \ref{def:sxi}}$
	\State $\textit{Compute } P_{n-s} \textit{ from Definition \ref{def:pns}}$
	\State $\textit{Compute knot-planes }h \in \Gamma(\Xi) \textit{ such that } h \cap supp(\Xi)\textit{. Collect these in } H$
		\State $\textit{Calculate the polyhedron } Q \textit{ for } supp(\Xi)$
	\State \Return $(Q, Recurse(H, Q))$
	
\EndProcedure
\Procedure{Recurse}{H, Q}
	\State /* Returns a Node object */
	\State $\textit{Choose } h \in H \textit{ that splits } Q $
	
	\If{$\textit{no such } h \textit{ exists }$}
		\State $\textit{Use equation \ref{eq:poly_form} to derive a polynomial peice } R \textit{ at the center of Q}$
		\State \Return $LeafNode(R)$
	\EndIf
	\State $\textit{Split } Q \textit{ with } h \textit{ into left and right regions } A,B$
	\State /* Here, left means all points $\bm{x}$ that satisfy $h(x)\le 0$, right is the opposite */
	\State $node \gets InternalNode()$
	\State $node.h \gets h$
	\State $node.left \gets Recurse(H \backslash \{ h \}, A)$
	\State $node.right \gets Recurse(H \backslash \{ h \}, B)$
	\State \Return $node$
\EndProcedure
\end{algorithmic}
\end{algorithm}

\begin{algorithm}
\caption{Evaluate box spline}\label{alg:eval-bs}
\begin{algorithmic}[1]
\Procedure{EvalBoxSpline}{$\bm{x}$, $Q$, $root$}
	\If{$\bm{x} \notin Q$}
		\State \Return 0
	\EndIf

	\State \Return $RecurseEval(\bm{x}, root)$
\EndProcedure

\Procedure{RecurseEval}{$\bm{x}$, $node$}
	\If{$node \textit{ is a LeafNode }$ }
		\State \Return $\textit{ evaluate node's polynomial at } \bm{x}$
	\EndIf
	\State $h \gets node.h$
	\State $L \gets node.left$ 
	\State $R \gets node.right$ 
	\If{ $h(\bm{x}) \le 0$}
		\State \Return $Recurse(\bm{x}, L)$
	\EndIf
	\State \Return $Recurse(\bm{x}, R)$
\EndProcedure
\end{algorithmic}
\end{algorithm}

It is important to note that the choice of $h$ in Algorithm \ref{alg:precompute-bs} will affect the performance of evaluation; $h$ should be chosen so as to balance the resulting binary tree. As a heuristic, we choose $h$ so that it splits the given polyhedron into polyhedra of roughly equivalent hyper-volumes. On average, the average traversal time should be $O(log(k))$ where $k$ is the total number of regions of the box spline. 
A variant on this algorithm is implemented in a SageMath  \cite{sage} worksheet and is available as supplementary material.
\section{Examples} \label{sec:examples}
\def\arraystretch{1.1}
Here we enumerate some examples of box splines. When feasible, we enumerate their polynomial regions (see Tables (1-3)), and  $P$ and $S$ sets. When we enumerate the sets for these box splines, we enumerate simplified versions of the sets, combining elements that correspond to like terms in the final summation.

\begin{example}{Courant Element:} \label{ex:courant}
The first example we consider is the well known Courant element which is constructed by taking the the two Cartesian principle lattice directions and adding the diagonal vector (see Figure (\ref{fig:courant})). This is given by the direction matrix
\begin{equation}
  \Xi = \begin{bmatrix}
    1 & 0 & 1 \\
    0 & 1 & 1
  \end{bmatrix}.
\end{equation}

\begin{figure}[h]
	\centering
	\includegraphics[scale=0.250]{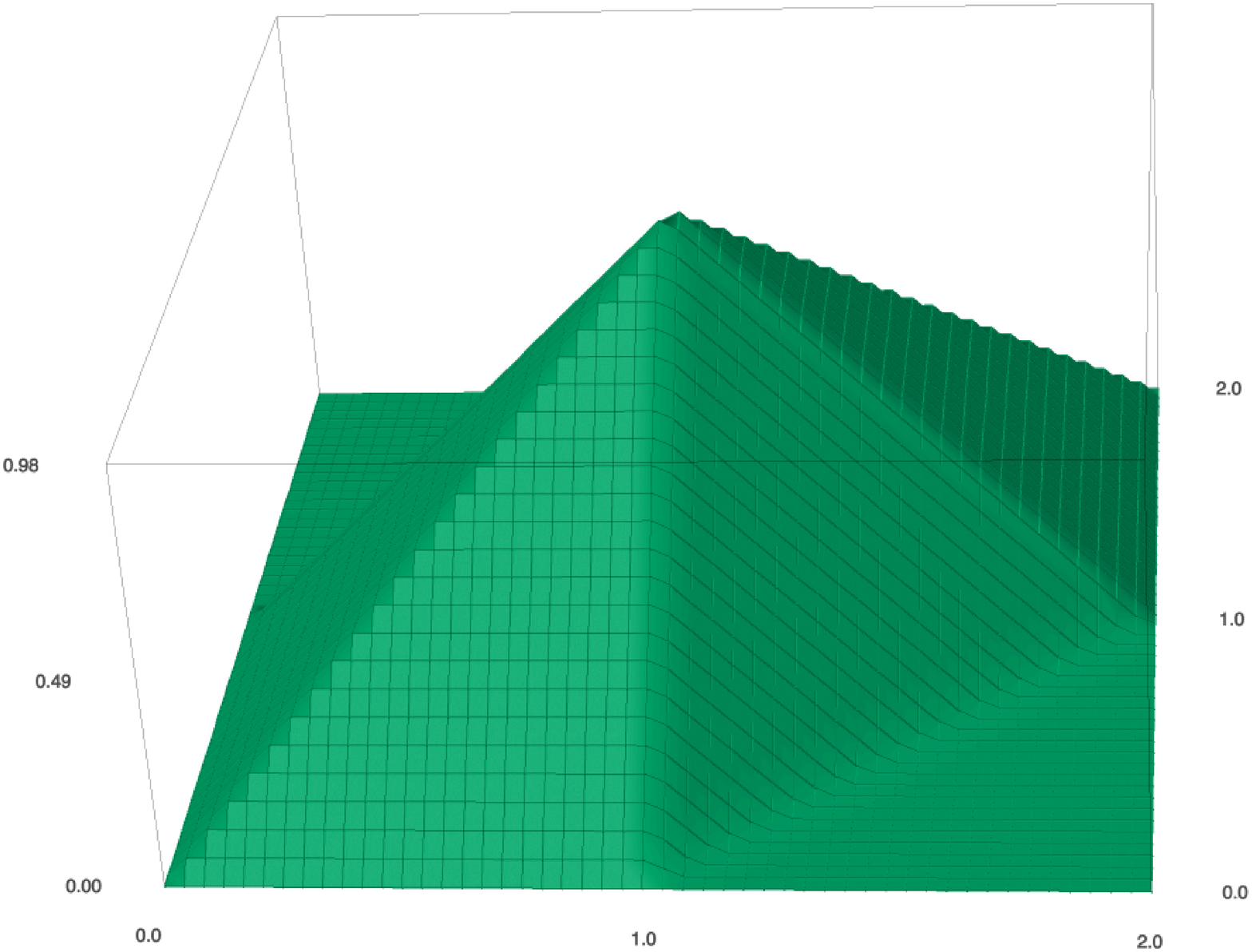}
	\includegraphics[scale=0.35]{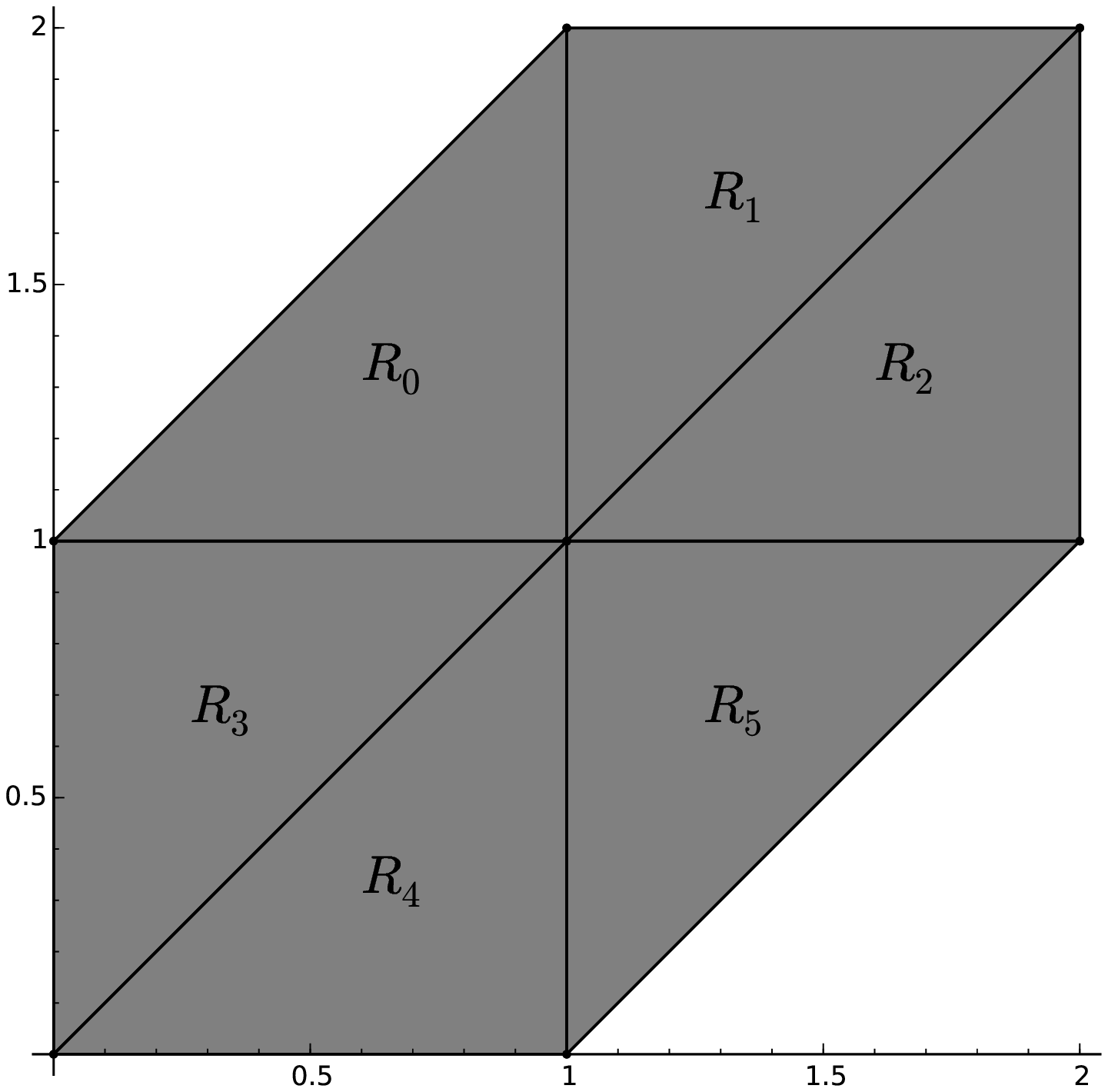}

	\caption{On the left is a plot of the Courant element, on the right is a plot of the different polynomial regions of the box spline.}
	\label{fig:courant}
\end{figure}

Enumerating the sets for this direction matrix yields 
  \begin{eqnarray*}
  S_\Xi  & = & \left\{
    \left(-1, \left( 2,  2 \right) \right),
    \left(1, \left( 2, 1 \right) \right),
    \left(1, \left( 1, 2 \right) \right),
    \left(-1, \left( 1,  0 \right) \right),
    \left(-1, \left( 0, 1 \right) \right),
    \left(0, \left( 1,  2 \right) \right)
  \right\}, \\
  P_1 & = & \left\{
    \left(1, \left(2, 1, 0 \right) \right),
    \left(-1, \left( 2, 0, 1 \right) \right)
  \right\}.
  \end{eqnarray*}

\end{example}

\begin{example}{Zwart-Powell Element:} \label{ex:zpelement}

The Zwart-Powell element is similar to the Courant element but has an additional vector added along a complementary diagonal. We provide a more thorough example for the construction of $P_2$ for the ZP element (see Figure (\ref{fig:zpel})). We first start by noting the ZP element's direction matrix and a basis for its null-space
  
\begin{equation}
    \Xi = \begin{bmatrix}
      1 & 0 & 1 & -1\\
      0 & 1 & 1 & 1
    \end{bmatrix}, \ \ker \Xi = \begin{bmatrix}
     \ 1 & \ 0 \\
     \ 1 & \ 2 \\
     -1 & -1 \\
     \ 0 & -1 
    \end{bmatrix}.
\end{equation}
We first construct $P_1$ by definition (\ref{def:pns}). Consider $(c,\bm\alpha) = (1, (1,1,1,1)) \in P_0$. Since this is the base case in definition (\ref{def:nuvector}), for this $\bm\alpha$ we have
$$
\bm{\nu_\alpha} = (1,1,-1, 0), \text{ and } m_\alpha = 1.
$$
Definition (\ref{def:pns}) also dictates that this produces elements for all $j$ with $m_\alpha < j \le n$ such that $\nu_\alpha(j)\ne 0$. For $j=2$ and $j=3$ we have the tuples
$$
\left(-1 \frac{1}{1}, (2,0,1,1)\right) \text{ and } \left(-1\frac{-1}{1}, (2,1,0,1)\right)
$$
respectively. Thus we have $P_1 = \{ (-1, (2,0,1,1)), (1, (2,1,0,1)\}$, and we apply the recursion to each tuple of this multiset to construct $P_2$. For $(c,\bm\alpha) = (-1, (2,0,1,1)) \in P_1$ we construct 
$$
C_\alpha = \begin{bmatrix}
1 & 2
\end{bmatrix}
\text{ which has } \ker C_\alpha = \begin{bmatrix}
 2 \\ 
 -1
\end{bmatrix}.
$$
Thus $\bm{t_\alpha} = (2, -1)$, and we have 
$$
\bm{\nu_\alpha} = \ker\Xi \cdot \bm{t_\alpha} = (2,0,-1, 1), \text{ and } m_\alpha = 1,
$$
which produces the elements 
$$
\left(-(-1) \frac{-1}{2}, (3,0,0,1)\right) \text{ and } \left(-1\frac{-1}{2}, (3,0,1,0)\right)
$$
for $j=3$ and $j=4$ respectively. Finally, we look at the case $(c,\bm\alpha) = (1, (2,1,0,1)) \in P_1$. Again, we construct 
$$
C_\alpha = \begin{bmatrix}
-1 & -1
\end{bmatrix}
\text{ which has } \ker C_\alpha = \begin{bmatrix}
 1 \\ 
 -1
\end{bmatrix}.
$$
This gives $\bm{t_\alpha} = (1, -1)$, and we have 
$$
\bm{\nu_\alpha} = \ker\Xi \cdot \bm{t_\alpha} = (1,-1,0, 1), \text{ and } m_\alpha = 1,
$$ producing the tuples 
$$
\left(-1 \frac{-1}{1}, (3,0,0,1)\right) \text{ and } \left(-1\frac{1}{1}, (3,1,0,0)\right).
$$
So we have 
$$
P_2 = \left\{ 
\left(-1/2, (3,0,0,1)\right), \left(1/2, (3,0,1,0)\right), \left(1, (3,0,0,1)\right), \left(-1, (3,1,0,0)\right) \right\}
$$
or, equivalently (by collecting like terms)
$$
P_2 = \left\{ 
\left(1/2, (3,0,0,1)\right), \left(1/2, (3,0,1,0)\right), \left(-1, (3,1,0,0)\right) \right\}.
$$ Applying the recursion in definition (\ref{def:sxi}) also yields the set
  \begin{eqnarray*}
    S_\Xi  & = & \{
    \left(-1, \left(-1/2,3/2\right)\right),
    \left(1, \left(-1/2,-3/2\right)\right),
    \left(1, \left(-3/2,1/2\right)\right),\left(1, \left(1/2,3/2\right)\right),\\
    & & 
    \left(-1, \left(1/2,-3/2\right)\right),
    \left(-1, \left(-3/2,-1/2\right)\right),
    \left(1, \left(3/2,-1/2\right)\right),
    \left(-1, \left(3/2,1/2\right)\right) \}.
    \end{eqnarray*}
  
	\begin{figure}[h]
		\centering
		\includegraphics[scale=0.200]{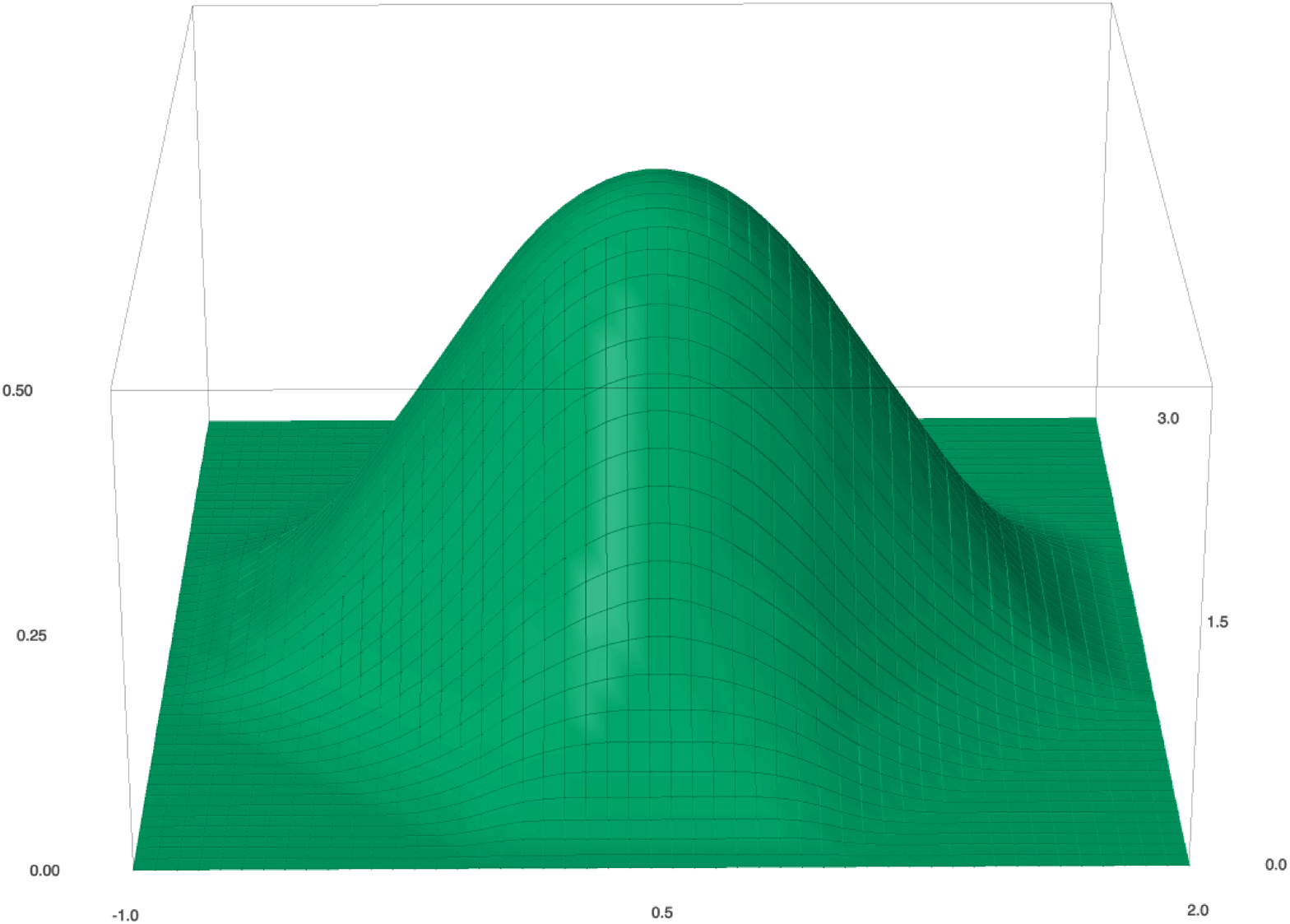}
		\includegraphics[scale=0.25]{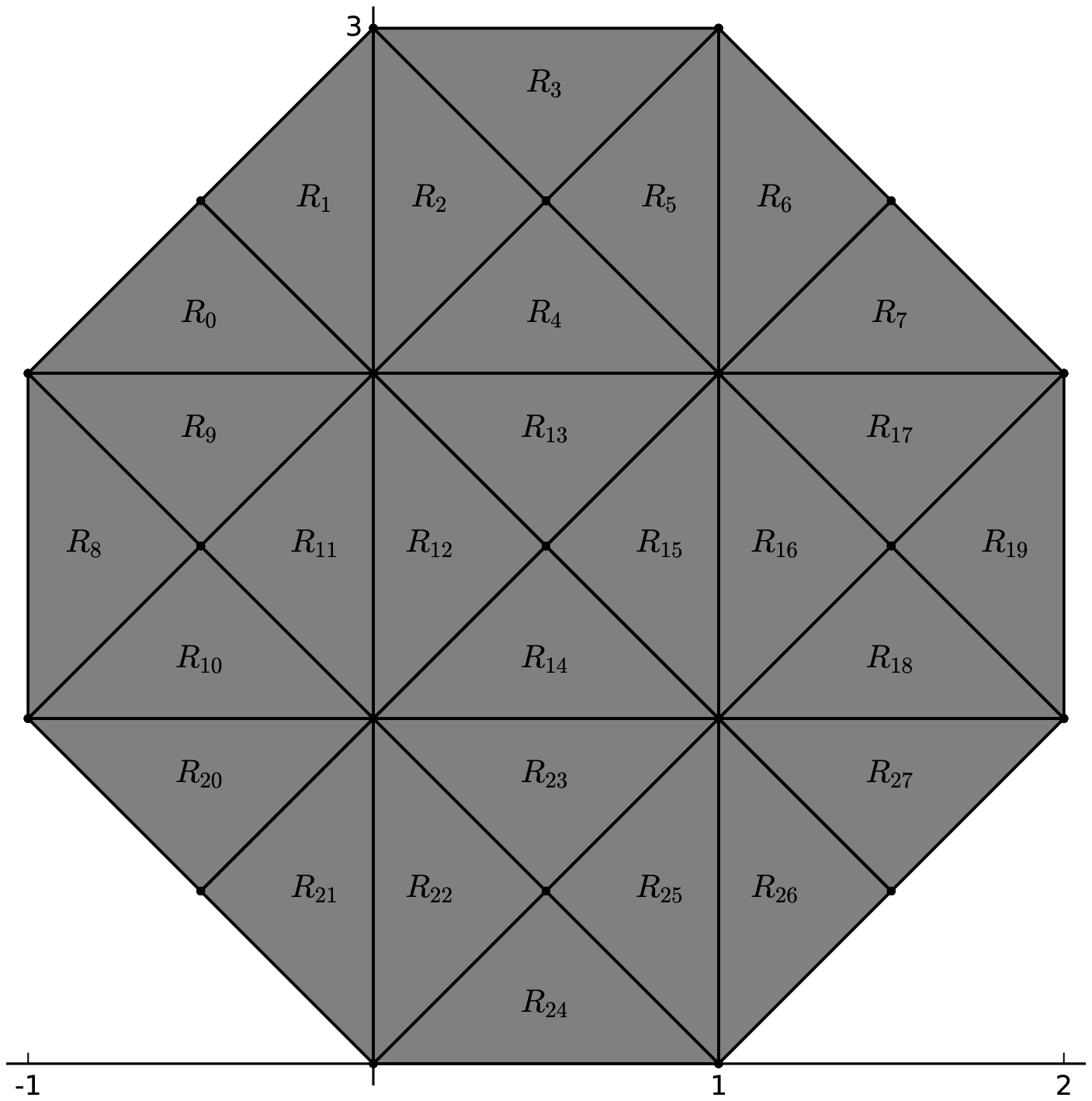}
		\caption{On the left is a plot of the ZP-element, on the right is a plot of the different polynomial regions of the box spline.} \label{fig:zpel}
	\end{figure}

\end{example}
\newpage
\begin{example}{Skewed Element:} \label{ex:potato}
As an example of a box spline that has not appeared in the literature, we introduce the four direction skewed element (see Figure (\ref{fig:potato})) defined by the matrix
  \begin{equation}
      \Xi = \begin{bmatrix}
      1 & 0 & 1 & 2\\
      0 & 1 & 1 & 1
    \end{bmatrix}.
  \end{equation}

  \begin{figure}[!ht]
    \centering
    \includegraphics[scale=0.190]{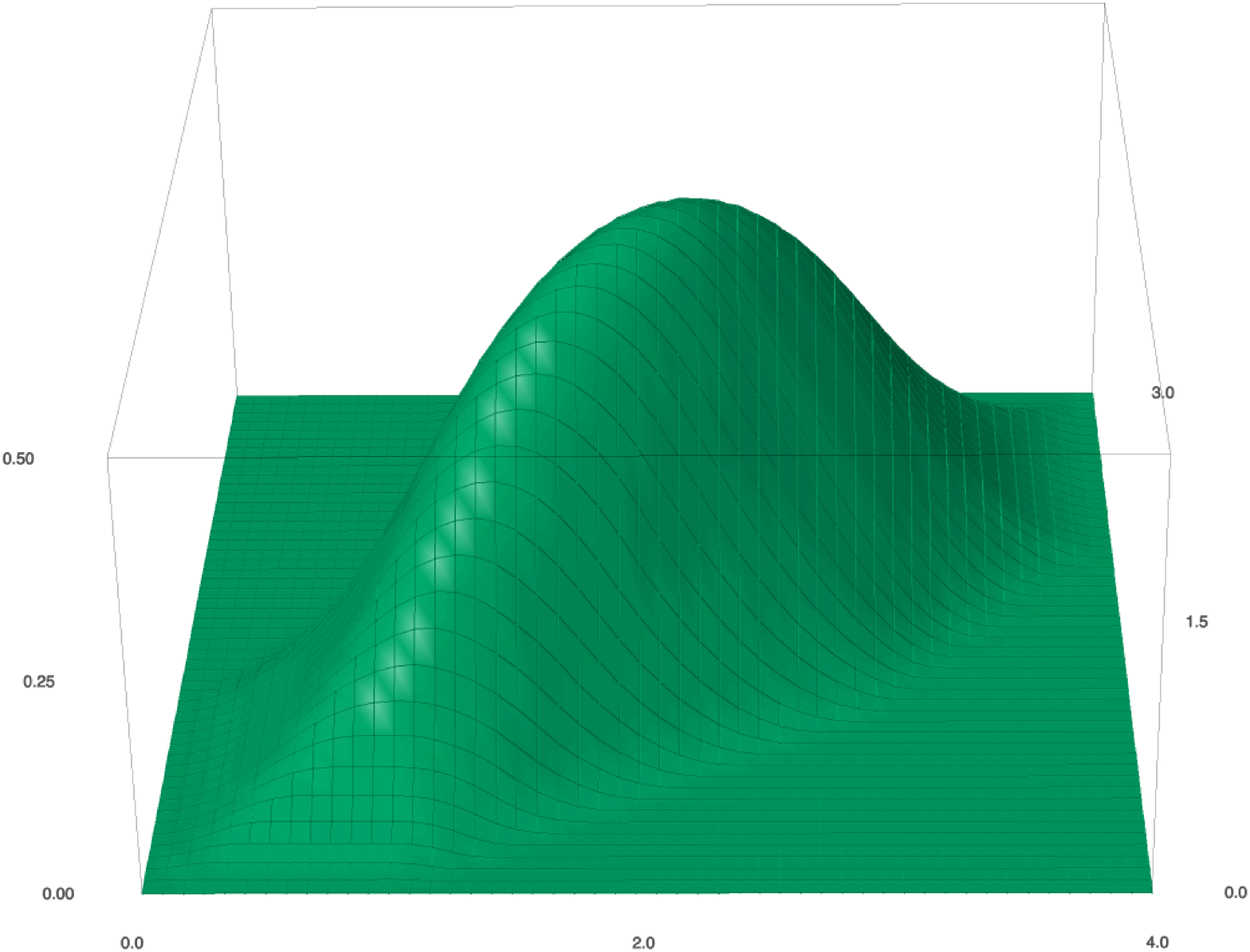}
    \includegraphics[scale=0.35]{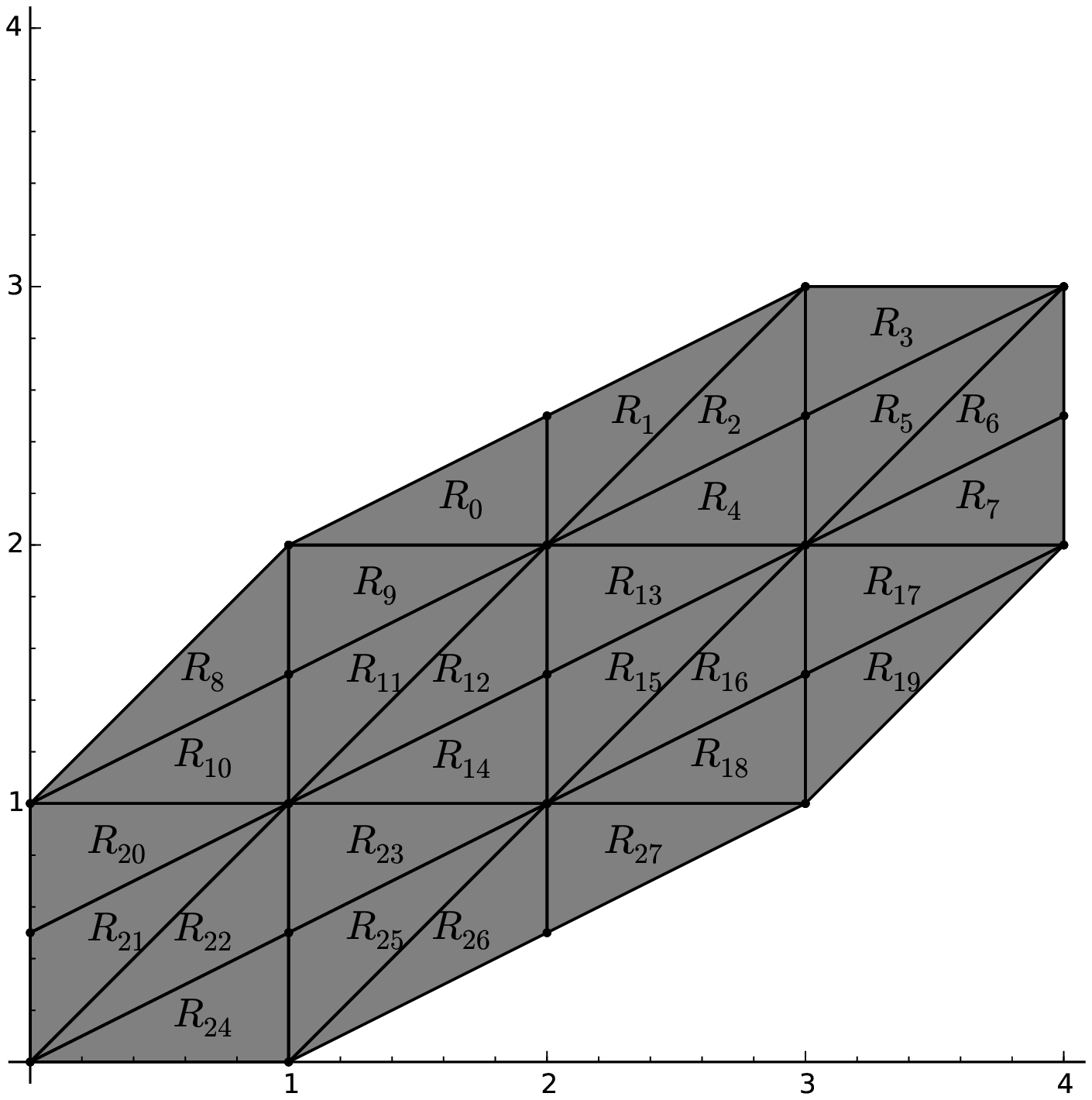}
    \caption{On the left is a plot of the Skewed element, on the right is a plot of the different polynomial regions of the box spline.} \label{fig:potato}
  \end{figure}

  \begin{eqnarray*}
    S_\Xi  & = & \{
      \left(1, \left(2,3/2\right)\right),
      \left(-1, \left(2,1/2\right)\right),
      \left(-1, \left(-2,-1/2\right)\right),
      \left(1, \left(-2,-3/2\right)\right),\\ & &
      \left(1, \left(-1,1/2\right)\right),
      \left(1, \left(1,-1/2\right)\right),
      \left(-1, \left(1,3/2\right)\right),
      \left(-1, \left(-1,-3/2\right)\right)      
    \} \\
    P_2 & = & \left\{
      \left(1/2, \left(3, 0, 0, 1 \right) \right),
      \left(-1, \left(3, 0, 1, 0 \right) \right),
      \left(-1/2,   \left(3, 1, 0, 0 \right) \right)
    \right\}.
  \end{eqnarray*}
  
\end{example}

\begin{example}{FCC Cubic Spline:} \label{ex:fcc}
Finally, as an example of a higher dimensional box spline, we enumerate the sets for the FCC Cubic Spline of Kim et al \cite{fccbox}.
\begin{equation}
\Xi = \begin{bmatrix}
  \ 1 & \ 1 & 1 & -1  & 0 & \ 0\\
  \ 1 & -1 &  0 & \ 0 & 1 & \ 1 \\
  \ 0 & \ 0 & 1 &  \ 1 & 1 & -1 \\
\end{bmatrix}
\end{equation}

\begin{figure}[H]
    \centering
    \includegraphics[scale=0.32]{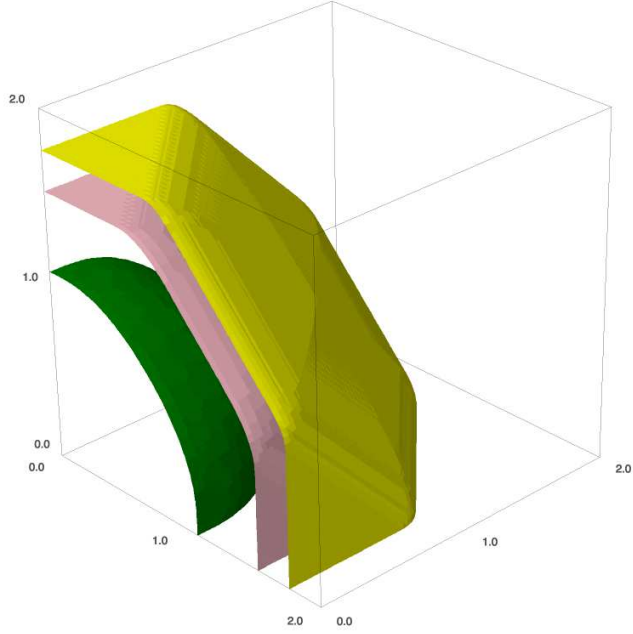}	
    \caption{Iso-contours for the level sets 1/16, 1/256, 1/1024, pictured in green, pink and yellow respectively.}
  \end{figure} \label{fig:fcc}
\end{example}

\begin{eqnarray*}
  S_\Xi  & = & \{
\left( 1, \left(2,1,1\right) \right),  
\left( 1, \left(1,3,2\right) \right),  
\left( -1, \left(0,3,1\right) \right), \\ & &  
\left( 1, \left(-1,3,2\right) \right),  
\left( 1, \left(0,2,0\right) \right),  
\left( -1, \left(2,2,2\right) \right), \\ & &  
\left( 1, \left(0,0,4\right) \right),  
\left( 1, \left(0,2,4\right) \right),  
\left( -1, \left(0,3,3\right) \right), \\ & &  
\left( -1, \left(-2,2,2\right) \right),  
\left( -1, \left(-2,0,2\right) \right),  
\left( -1, \left(2,0,2\right) \right), \\ & &  
\left( -1, \left(0,-1,1\right) \right),  
\left( 1, \left(0,0,0\right) \right),  
\left( 1, \left(-2,1,3\right) \right), \\ & &  
\left( -1, \left(0,-1,3\right) \right),  
\left( 1, \left(2,1,3\right) \right),  
\left( -1, \left(-1,1,0\right) \right), \\ & &  
\left( 1, \left(-1,-1,2\right) \right),  
\left( 1, \left(-2,1,1\right) \right),  
\left( 1, \left(1,-1,2\right) \right), \\ & &  
\left( -1, \left(1,1,0\right) \right),  
\left( -1, \left(-1,1,4\right) \right),  
\left( -1, \left(1,1,4\right) \right) \ \} \\
  P_3 & = &
    \{\left( 1, \left(4,1,1,0,0,0\right) \right),  
\left( 1, \left(4,1,0,0,1,0\right) \right),  
\left( -1, \left(4,0,1,1,0,0\right) \right), \\ & &  
\left( 1, \left(3,2,1,0,0,0\right) \right),  
\left( -1, \left(3,2,0,0,1,0\right) \right),  
\left( 1, \left(4,0,0,0,1,1\right) \right), \\ & &  
\left( -1, \left(4,1,0,0,0,1\right) \right),  
\left( -1, \left(4,1,0,1,0,0\right) \right),  
\left( 1, \left(3,2,0,0,0,1\right) \right), \\ & &  
\left( -1, \left(3,2,0,1,0,0\right) \right) \
  \}.
\end{eqnarray*}
As this box spline has many regions, we omit the enumeration of polynomial pieces. Figure (\ref{fig:fcc}) shows the numerical stability of this form by plotting several iso-contours of this box spline.

\section{Conclusion}
In this work, we provided an explicit decomposition scheme for the piecewise polynomial form of any non-degenerate box spline. 
While there do exist other evaluation schemes for evaluating box splines, they all typically depend on recursively evaluating a box spline, or they are ad-hoc in their construction procedure. 
Our method is general and works for all non-degenerate box-splines. 
However, we only provide a characterization of the splines themselves, when used in an approximation space, it is possible to derive more efficient evaluation schemes for the semi-discrete convolution sum between a box spline and  lattice data. This is an avenue for future work.

We also provide, as a SageMath \cite{sage,Horacsek2016} worksheet, the code to derive the piecewise polynomial form of any box-spline, as well as the code to generate BSP trees that represent the evaluation schemes, and code to generate C++ code from those BSPs.


\bibliographystyle{spmpsci}      


\bibliography{paper}
\newpage
\begin{table}[b]
	\centering
	  \begin{tabular}{| c | c | }
	    \hline
	    Region & Polynomial Piece \\
	    \hline \hline
	    $R_0$  & $  x - y + 1  $ \\
	    $R_1$  & $  -y + 2  $ \\
	    $R_2$  & $  -x + 2  $ \\
	    $R_3$  & $  x  $ \\
	    $R_4$  & $  y  $ \\
	    $R_5$  & $  -x + y + 1  $ \\
	    \hline
	  \end{tabular}
	  \caption{Polynomial regions of the Courant element.}
\end{table}

\begin{table}[b]
	\centering
    \begin{tabular}{| c | c |}
      \hline
      Region & Polynomial Piece \\
      \hline \hline
      $R_{0}, R_{1} $  & $  \frac{1}{4} \, x^{2} - \frac{1}{2} \, x y + \frac{1}{4} \, y^{2} + \frac{3}{2} \, x - \frac{3}{2} \, y + \frac{9}{4}  $ \\

      $R_{2}$  & $  -\frac{1}{4} \, x^{2} - \frac{1}{2} \, x y + \frac{1}{4} \, y^{2} + \frac{3}{2} \, x - \frac{3}{2} \, y + \frac{9}{4}  $ \\
      $R_{3}$  & $  \frac{1}{2} \, y^{2} - 3 \, y + \frac{9}{2}  $ \\
      $R_{4}$  & $  -\frac{1}{2} \, x^{2} + \frac{1}{2} \, x - \frac{1}{2} \, y + \frac{5}{4}  $ \\
      $R_{5}$  & $  -\frac{1}{4} \, x^{2} + \frac{1}{2} \, x y + \frac{1}{4} \, y^{2} - x - 2 \, y + \frac{7}{2}  $ \\
      $R_{6}, R_{7}$  & $  \frac{1}{4} \, x^{2} + \frac{1}{2} \, x y + \frac{1}{4} \, y^{2} - 2 \, x - 2 \, y + 4  $ \\
      $R_{8}$  & $  \frac{1}{2} \, x^{2} + x + \frac{1}{2}  $ \\
      $R_{9}$  & $  \frac{1}{4} \, x^{2} - \frac{1}{2} \, x y - \frac{1}{4} \, y^{2} + \frac{3}{2} \, x + \frac{1}{2} \, y + \frac{1}{4}  $ \\
      $R_{10}$  & $  \frac{1}{4} \, x^{2} + \frac{1}{2} \, x y - \frac{1}{4} \, y^{2} + y - \frac{1}{2}  $ \\
      $R_{11}$  & $  -\frac{1}{2} \, y^{2} + \frac{1}{2} \, x + \frac{3}{2} \, y - \frac{3}{4}  $ \\
      $R_{12}, R_{13}, R_{14}, R_{15}$  & $  -\frac{1}{2} \, x^{2} - \frac{1}{2} \, y^{2} + \frac{1}{2} \, x + \frac{3}{2} \, y - \frac{3}{4}  $ \\
      $R_{16}$  & $  -\frac{1}{2} \, y^{2} - \frac{1}{2} \, x + \frac{3}{2} \, y - \frac{1}{4}  $ \\
      $R_{17}$  & $  \frac{1}{4} \, x^{2} + \frac{1}{2} \, x y - \frac{1}{4} \, y^{2} - 2 \, x + 2  $ \\
      $R_{18}$  & $  \frac{1}{4} \, x^{2} - \frac{1}{2} \, x y - \frac{1}{4} \, y^{2} - \frac{1}{2} \, x + \frac{3}{2} \, y - \frac{1}{4}  $ \\
      $R_{19}$  & $  \frac{1}{2} \, x^{2} - 2 \, x + 2  $ \\
      $R_{20}, R_{21}$  & $  \frac{1}{4} \, x^{2} + \frac{1}{2} \, x y + \frac{1}{4} \, y^{2}  $ \\
      $R_{22}$  & $  -\frac{1}{4} \, x^{2} + \frac{1}{2} \, x y + \frac{1}{4} \, y^{2}  $ \\
      $R_{23}$  & $  -\frac{1}{2} \, x^{2} + \frac{1}{2} \, x + \frac{1}{2} \, y - \frac{1}{4}  $ \\
      $R_{24}$  & $  \frac{1}{2} \, y^{2}  $ \\
      $R_{25}$  & $  -\frac{1}{4} \, x^{2} - \frac{1}{2} \, x y + \frac{1}{4} \, y^{2} + \frac{1}{2} \, x + \frac{1}{2} \, y - \frac{1}{4}  $ \\
      $R_{26}, R_{27}$  & $  \frac{1}{4} \, x^{2} - \frac{1}{2} \, x y + \frac{1}{4} \, y^{2} - \frac{1}{2} \, x + \frac{1}{2} \, y + \frac{1}{4}  $ \\
      \hline
    \end{tabular}
	\caption{Polynomial regions of the ZP-element.}
\end{table}

\begin{table}[!b]
	\centering
  \begin{tabular}{| c | c |}
  \hline
    Region & Polynomial Piece \\
    \hline \hline
      $R_{0}$  & $  \frac{1}{2} \, x^{2} - x y + \frac{1}{2} \, y^{2} + 2 \, x - 2 \, y + 2  $ \\
      $R_{1}$  & $  -\frac{1}{2} \, x^{2} + \frac{1}{4} \, y^{2} + 2 \, x - 2 \, y + 2  $ \\
      $R_{2}, R_{3}$  & $  \frac{1}{4} \, y^{2} - 2 \, y + 4  $ \\
      $R_{4}$  & $  -\frac{1}{2} \, x^{2} + x y - \frac{1}{4} \, y^{2} - x - y + \frac{7}{2}  $ \\
      $R_{5}$  & $  \frac{1}{2} \, x^{2} - 3 \, x + \frac{9}{2}  $ \\
      $R_{6}$  & $  x^{2} - x y + \frac{1}{4} \, y^{2} + x - \frac{1}{2} \, y + \frac{1}{4}  $ \\
      $R_{7}$  & $  \frac{1}{2} \, x^{2} - x y + \frac{1}{4} \, y^{2} + 2 \, x - \frac{1}{2} \, y - \frac{1}{4}  $ \\
      $R_{8}$  & $  -\frac{1}{2} \, x^{2} + 2 \, x - \frac{1}{2} \, y - \frac{1}{4}  $ \\
      $R_{9}, R_{10}$  & $  -x^{2} + x y - \frac{1}{2} \, y^{2} + x + \frac{1}{2} \, y - \frac{3}{4}  $ \\
      $R_{11}$  & $  -\frac{1}{2} \, x^{2} + x y - \frac{1}{2} \, y^{2} - x + \frac{1}{2} \, y + \frac{5}{4}  $ \\
      $R_{12}$  & $  \frac{1}{2} \, x^{2} - \frac{1}{4} \, y^{2} - 3 \, x + \frac{3}{2} \, y + \frac{9}{4}  $ \\
      $R_{13}$  & $  x^{2} - x y + \frac{1}{4} \, y^{2} - 3 \, x + \frac{3}{2} \, y + \frac{9}{4}  $ \\
      $R_{14}$  & $  x^{2} - x y + \frac{1}{4} \, y^{2} + x - \frac{1}{2} \, y + \frac{1}{4}  $ \\
      $R_{15}$  & $  \frac{1}{2} \, x^{2} - \frac{1}{4} \, y^{2} + \frac{1}{2} \, y - \frac{1}{4}  $ \\
      $R_{16}$  & $  -\frac{1}{2} \, x^{2} + x y - \frac{1}{2} \, y^{2} + \frac{1}{2} \, y - \frac{1}{4}  $ \\
      $R_{17}, R_{18}$  & $  -x^{2} + x y - \frac{1}{2} \, y^{2} + x + \frac{1}{2} \, y - \frac{3}{4}  $ \\
      $R_{19}$  & $  -\frac{1}{2} \, x^{2} + x + \frac{1}{2} \, y - \frac{3}{4}  $ \\
      $R_{20}$  & $  \frac{1}{2} \, x^{2} - x y + \frac{1}{4} \, y^{2} - x + \frac{3}{2} \, y + \frac{1}{4}  $ \\
      $R_{21}$  & $  x^{2} - x y + \frac{1}{4} \, y^{2} - 3 \, x + \frac{3}{2} \, y + \frac{9}{4}  $ \\
      $R_{22}$  & $  \frac{1}{2} \, x^{2}  $ \\
      $R_{23}$  & $  -\frac{1}{2} \, x^{2} + x y - \frac{1}{4} \, y^{2}  $ \\
      $R_{24}, R_{25}$  & $  \frac{1}{4} \, y^{2}  $ \\
      $R_{26}$  & $  -\frac{1}{2} \, x^{2} + \frac{1}{4} \, y^{2} + x - \frac{1}{2}  $ \\
      $R_{27}$  & $  \frac{1}{2} \, x^{2} - x y + \frac{1}{2} \, y^{2} - x + y + \frac{1}{2}  $ \\
      \hline
  \end{tabular}
	  \caption{Polynomial regions of the Skewed element.}
\end{table}
\end{document}